\newtheorem{theorem}{Theorem}[section]
\theoremstyle{definition}
\newtheorem{algorithm}[theorem]{Algorithm}
\theoremstyle{remark}
\numberwithin{equation}{section}
\begin{document}

\title{A Convex Surface with Fractal Curvature}

\author{Iancu Dima}
\address{Department of Mathematics, Ithaca College, 953 Danby Road, Ithaca, NY 14850, USA}
\email{idima1@ithaca.edu}
\thanks{ID was supported by the National Science Foundation through the Research Experience for Undergraduates (REU) Program, Grant DMS-1156350.}

\author{Rachel Popp}
\address{The Graduate Center, CUNY, 180 Queens Gate, 365 Fifth Avenue, New York, NY 10016, USA}
\email{rachel.popp@gmail.com}
\thanks{RP was supported in part by the National Science Foundation, Grant DMS-1162045}

\author{Robert S. Strichartz}
\address{Department of Mathematics, Cornell University, Malott Hall, Ithaca, NY 14853, USA}
\email{str@math.cornell.edu}
\thanks{RSS was supported in part by the National Science Foundation, Grant DMS-1162045.}

\author{Samuel C. Wiese}
\address{Department of Mathematics, Universit\"at Leipzig, Augustusplatz 10, 04109 Leipzig, Germany}
\email{sw31hiqa@studserv.uni-leipzig.de}
\thanks{SCW was supported by the Foundation of German Business (SDW)}

\subjclass[2000]{35P05}

\date{July 8, 2019.}

\keywords{Laplacian, eigenvalues, eigenfunctions, polyhedra, curvature}

\begin{abstract}
We construct a surface that is obtained from the octahedron by pushing out 4 of the faces so that the curvature is supported in a copy of the Sierpinski gasket in each of them, and is essentially the self similar measure on SG. We then compute the bottom of the spectrum of the associated Laplacian using the finite element method on polyhedral approximations of our surface, and speculate on the behavior of the entire spectrum.
\end{abstract}

\maketitle


\section{Introduction.}
A convex surface in 3-space may be regarded as an Alexandrov space, and so it has a well-defined curvature (as a measure) and a well-defined Laplacian. In this paper we construct an example of such a surface whose curvature is a fractal measure related to the Sierpinski Gasket (SG). For an introduction see the first chapter of \cite{S}. Our construction produces the surface $S$ as a limit of convex polyhedra $P_n$. The curvature of each $P_n$ is a discrete measure supported on its vertices, and these discrete measures will converge to the fractal measure on $S$. Each $P_n$ has a Laplacian, and the limit is the Laplacian on $S$. We compute the spectra of each of the Laplacians on $P_n$ and thereby obtain approximations to the spectrum of the Laplacian on $S$.

The first approximating polyhedron (with 6 vertices) $P_0$ is just a regular octahedron. Half of the faces will remain flat in the construction, but with folding. Of course folding does not introduce curvature. The other four alternating faces will have a SG inscribed in them, and then the “upside down triangles” in the SG picture will be pushed out, step-by-step, as $n$ increases. When $n=1$ just a single central triangle will be pushed out of each face. The result will be a polyhedron $P_1$ with 18 vertices, the original 6 together with 3 new vertices along the sides of the 4 faces (The flat faces will fold to accomodate the eruptions in the other curved faces). If we choose the angles correctly, the original vertices will have their curvature reduced from $\frac{2\pi}{3}$ to $\frac{\pi}{3}$, while the new vertices will have curvature $\frac{\pi}{6}$. Note that by the Gauss-Bonet Theorem the total curvature measure on $P_1$ will then be the sum of the discrete measures on the 4 faces with weight $\frac{\pi}{6}$ on each of the 6 vertices on the face (the original vertices lie in exactly 2 of these faces). Thus, we have the first discrete approximation to a standard self-similar measure with total mass $\pi$ on the SG pictured on these faces.

The same idea of pushing out successively smaller triangles to create $P_n$ from $P_{n-1}$ produces polyhedra whose curvature gives better and better approximations to the self-similar SG measures. The details are given in Section 2. In Section 3 we describe the finite element method we use to approximate the spectra of each $P_n$. In Section 4 we discuss the numerical results of these computations.

\section{Geometry}

\subsection{Construction of angles}
Each of the four curved faces in $P_n$ is subdivided into $3^m$ triangles of the SG construction labeled by words $w=(w_1,w_2,\dots,w_m)$ of length $m$, together with “upside down” equilateral triangles that will be flat. So let $T_w$ denote the triangle associated with the word $w$, namely $T_w = F_{w_1}\circ F_{w_2} \circ\dots\circ F_{w_m}T$ where $T$ is the full face, and let $\alpha_j(T_w)$ denote the angles at the vertices for $j=0,1,2$. We will choose
\begin{equation}
\alpha_j(T_w)=\frac{\alpha}{3}+(b(w,j)-1)\frac{\pi}{3^{m+1}}
\end{equation}
where $b(w,j)$ are integers to be described below.
\begin{theorem}
Suppose we can find values of $b(w,j)$ that are symmetric under permutations of $(0,1,2)$ and satisfy the following conditions:
\begin{align}
b(w,0)+b(w,1)+b(w,2)&=3 \text{ for all $w$},\\
b(w'_i,j)+b(w'_j,i)&=0 \text{ for all $w'$ of length $m-1$},\\
b(0,0)&=3^m \text{where $0$ denotes the word $(0,0,\dots,0)$.}
\end{align}
Then the curvature of $P_m$ is $\frac{2\pi}{3^{m+1}}$ at every vertex.
\end{theorem}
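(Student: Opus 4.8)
The plan is to compute the curvature at each vertex of $P_m$ by summing the face angles meeting there and subtracting from $2\pi$, then show that the hypotheses on $b(w,j)$ force every such sum to equal $2\pi - \frac{2\pi}{3^{m+1}}$. The curvature at a vertex $v$ is $2\pi$ minus the total angle subtended by all triangles (both the curved $T_w$ triangles and the flat "upside down" triangles) incident to $v$. So I would begin by classifying the vertices of $P_m$ into types according to how they sit in the SG combinatorics: the three original corner vertices of a face, the vertices that are endpoints shared between two faces, and the interior vertices created at level $m$. For each type I need to enumerate exactly which triangles $T_w$ meet at the vertex and read off the relevant angles $\alpha_j(T_w)$ from the defining formula $\alpha_j(T_w)=\frac{\alpha}{3}+(b(w,j)-1)\frac{\pi}{3^{m+1}}$.

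Let me think carefully here. Before starting the summation I should pin down what $\alpha$ is: since each $T_w$ is a Euclidean triangle, its three angles sum to $\pi$, so condition (2.2), $b(w,0)+b(w,1)+b(w,2)=3$, is exactly the constraint $\sum_j \alpha_j(T_w)=\alpha+\,0\cdot\frac{\pi}{3^{m+1}}=\pi$, forcing $\alpha=\pi$. This is the first key observation and I would record it immediately, since it fixes the base angle $\frac{\alpha}{3}=\frac{\pi}{3}$, the angle of an equilateral triangle.

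The main computation is then the angle sum at a vertex. At an interior vertex the incident triangles come in pairs: a curved triangle contributing an angle $\alpha_j(T_{w'_i})$ and, across an edge, either another curved triangle or a flat equilateral triangle (contributing $\frac{\pi}{3}$). Condition (2.3), $b(w'_i,j)+b(w'_j,i)=0$, is precisely the relation that lets adjacent curved-triangle contributions cancel their $b$-dependent excess against each other, so that the running total telescopes down to a fixed multiple of $\frac{\pi}{3}$ plus a controlled correction; condition (2.4), $b(0,0)=3^m$, handles the special corner vertex where the full symmetry of the cancellation is broken and supplies exactly the right boundary term. So the strategy is to write the angle sum at each vertex type as $\frac{\pi}{3}\cdot(\text{number of incident triangles}) + \frac{\pi}{3^{m+1}}\sum(b-1)$, use the symmetry of $b$ under permuting $(0,1,2)$ together with conditions (2.2)–(2.4) to evaluate the $b$-sum, and check in every case that the curvature $2\pi-(\text{angle sum})$ collapses to $\frac{2\pi}{3^{m+1}}$.

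**The hard part will be** the bookkeeping of the local combinatorics: correctly identifying, for each of the several vertex types, the full list of incident triangles and which word $w$ and index $j$ each contributes, and then matching the cancellation pattern in condition (2.3) to the actual edge-adjacencies in the SG subdivision. The symmetry hypothesis on $b(w,j)$ should streamline this, since it guarantees that the three "directions" at a vertex are interchangeable and so I only need to treat one representative of each vertex type rather than enumerate every vertex separately; nonetheless, verifying that the pairing relation (2.3) really corresponds to genuinely adjacent faces (rather than an artifact of the labeling) is the step I expect to require the most care, and it is where the flat "upside down" triangles and the folding of the uncurved faces must be tracked to ensure no angle is double-counted or omitted.
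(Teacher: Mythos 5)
Your plan is correct and is essentially the paper's own proof: the paper likewise uses (2.2) to fix each triangle's angle sum at $\pi$ (so the base angle is $\frac{\pi}{3}$), then computes the angle deficit case by case — at the original octahedron vertices two flat angles $\frac{\pi}{3}$ plus two curved angles controlled by (2.4), and at all other vertices one angle $\frac{\pi}{3}$ from a flat upside-down triangle, one angle $\pi$ from lying on the edge of a flat piece, and two curved angles $\alpha_j(T_{w'i})+\alpha_i(T_{w'j})$ whose $b$-excess cancels by (2.3). Your identification of the roles of (2.2), (2.3), (2.4) and of the vertex-type case analysis matches the paper exactly; the bookkeeping you flag as the hard part is precisely the short two-case computation the paper carries out.
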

\begin{proof}
Condition (2.2) shows that the angle sum for each triangle $T_w$ is $\pi$. Now consider a vertex of the original octahedron. Then it has two angles of $\frac{\pi}{3}$ each from the flat faces and two angles of $\frac{\pi}{3}+(3^m-1)\frac{\pi}{3^{m+1}}$ at the curved faces by (2.4) and its symmetric versions. Thus the total angle sum is $\frac{4\pi}{3}+\frac{2\pi}{3}-\frac{2\pi}{3^{m+1}}=2\pi-\frac{2\pi}{3^{m+1}}$ and so the curvature there is $\frac{3\pi}{3^{m+1}}$.
Next consider a vertex interior to one of the faces. Then it has one angle $\frac{\pi}{3}$ from the equilateral triangle that touches at that point, and one angle $\pi$ from the edge of a triangle, and then by (2.3) the remaining two angles add to $\frac{2\pi}{3}-\frac{\pi}{3^{m+1}}$, so again the curvature is $\frac{2\pi}{3^{m+1}}$. Here we use the observation that the last two angles have the form $\alpha_j(T_{w'i})$ and $\alpha(T_{w'j})$.
\end{proof}
Our goal is to construct the constant $b(w,j)$ satisfying the conditions of the theorem. By the symmetry it suffices to do this for words $w$ with the initial $w_1=0$, so $w=0w'$. We show a solution for $m=2$ and $m=3$, filling in the $b$-value in each
angle in base-3 notation (Fig. \ref{fig:bvals}).
\begin{figure}
\includegraphics[width=\linewidth]{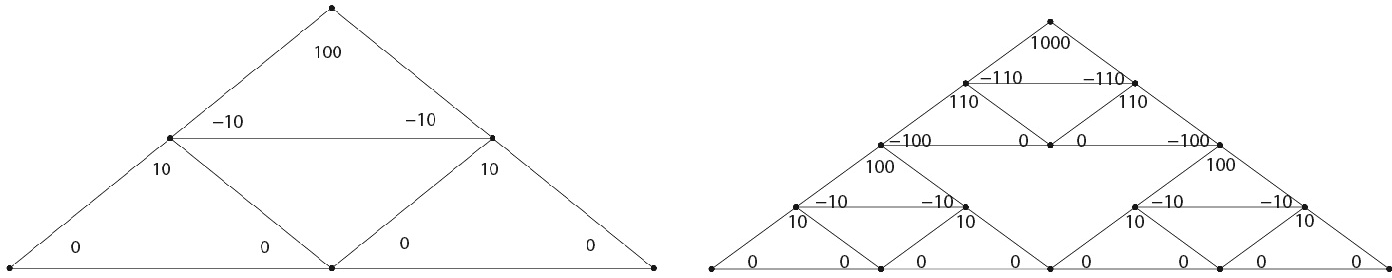}
\caption{The values of $b$ for level $m=2$ (left) and $m=3$ (right)}
\label{fig:bvals}
\end{figure}
The conditions are easily verified by inspection, and since $b(01,1)=0$ and $b(02,2)=0$, condition (2.3) holds at the bottom vertices for $m=2$ (similarly since $b(011,1)=b(022,2)=0$ for $m=3$) when we join up with the symmetric lower thirds of the face. We observe that the $b$-values on each of the two lower thirds for $m=3$ are identical to the whole set of $m=2$ values. This will be true in general when we describe an inductive construction from level $m$ to level $m+1$, so we only need to describe the $b$-values on the top third. We also note that all $b$-values are divisible by $3$, and the only base-3 digits are $0$ and $1$.

If we we examine the values of $b(w,0)$ along the right boundary edge, that is each $w_j=0$ or $1$, we see a decrease from top to bottom. For $m=3$ we have $b(000,0)=1000$, $b(001,0)=110$, $b(010,0)=100$, and $b(011,0)=10$. If we interpret $w$ as a base-2 number then these increase $0$, $01$, $10$, $11$ and if we consider $2^{m-1}-w$ these decrease $100$, $11$, $10$, $1$. Note that if we add a final $0$ and interpret as a base-3 number we get the correct $b$-values. In our first algorithm we extend this to the case of general $m$.
\begin{algorithm}
For any integer base-2 written as
\begin{align}
n&=\sum_{k=1}^N 2^{J_k} \text{for $0\leq J_1<J_2<\dots<J_N$ define}\\
b_n&=3\sum_{k=1}^N 3^{J_k}\text{.}
\end{align}
This is rereading the base-$2$ representation as base-$3$ and adding the digit $0$ at the end. Then set
\begin{equation}
b(w,0)=b_n
\end{equation}
if $w=0w'$ is a word of length $m$ consisting of digits $0$ or $1$ only, and
\begin{equation}
n=2^{m-1}-w\text{.}
\end{equation}
\end{algorithm}
Next we describe the inductive algorithm that computes $b$-values on level $m+1$ given the values on level $m$.
\begin{algorithm}
By symmetry we may assume $w_1=0$, so $w=0\tilde{w}$ with $|\tilde{w}|=m$. We consider three cases:
\begin{enumerate}
\item $\tilde{w}$ consists of digits $0$ and $1$ only. Then we apply Algorithm 2.2 to set $b(0\tilde{w},0)=b_n$. We then set
\begin{equation}
b(0\tilde{w},1)=-b_{n-1}
\end{equation}
so that (2.3) holds at the vertex, and
\begin{equation}
b(0\tilde{w},2)=3-b_n-b_{n-1}
\end{equation}
so that (2.2) holds on the triangle. Similarly, if $\tilde{w}$ consists of digits $0$ and $2$ only, we define $b(0\tilde{w},j)$ by symmetry.
\item If $\tilde{w}=1w'$, then set
\begin{equation}
b(01w',j)=b(0w',j)
\end{equation}
and similarly $b(02w',j)=b(0w',j)$. Note that this gives that bottom two thirds at level $m+1$ equal to the whole of level $m$.
\item If $\tilde{w}=0w'$ and both digits $1$ and $2$ occur in $w'$, then set
\begin{equation}
b(00w',j)=b(0w',j)
\end{equation}
\end{enumerate}
\end{algorithm}
\begin{theorem}
The $b$-values constructed by Algorithms 2.2 and 2.3 satisfy the conditions of Theorem 2.1.
\end{theorem}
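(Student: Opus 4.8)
The plan is to argue by induction on the level $m$, showing that at every stage Algorithms 2.2 and 2.3 produce $b$-values that are symmetric under $S_3$ and satisfy conditions (2.2), (2.3), and (2.4). For the base case $m=1$ the only words are the single letters $0,1,2$; symmetry forces $b(j,j)=3$ and $b(i,j)=0$ for $i\neq j$, which satisfies all three conditions (in particular $b(0,0)=3=3^1$). For the inductive step I would assume the claim at level $m$ and examine the level-$(m+1)$ values, which Algorithm 2.3 defines from the level-$m$ values together with the boundary data of Algorithm 2.2.

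Two of the conditions are essentially immediate. Condition (2.4) at level $m+1$ asks that $b(0,0)=3^{m+1}$ for the all-zeros word; writing it as $0\tilde w$ with $\tilde w$ all zeros, Algorithm 2.2 gives $n=2^{m}$ and hence $b_n=3\cdot 3^{m}=3^{m+1}$. For condition (2.2) I would split along the cases of Algorithm 2.3: in case (1) the three values $b_n$, $-b_{n-1}$, and the quantity in (2.11) are chosen precisely so that their sum is $3$ (here one must record the correct sign, namely $b(0\tilde w,2)=3-b_n+b_{n-1}$, so that the $\pm b_{n-1}$ terms cancel), while in cases (2) and (3) each $b(\,\cdot\,,j)$ equals a level-$m$ value on a shorter word and the sum is $3$ by the inductive hypothesis. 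I would also pause to check that the construction is well defined and genuinely $S_3$-symmetric: Algorithm 2.3 specifies values only for words with $w_1=0$ and extends the rest by the imposed transposition $1\leftrightarrow 2$ and the cyclic reduction of $w_1$ to $0$, which generate all of $S_3$; the content is consistency on the overlap of cases (1) and (2), namely $\tilde w=1v$ with $v$ consisting of $0$'s and $1$'s, which reduces to the identity $2^{m}-(01v)=2^{m-1}-(0v)$ between base-$2$ readings so that Algorithm 2.2 assigns the same $b_n$ in both descriptions.

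The heart of the proof is condition (2.3), $b(w'i,j)+b(w'j,i)=0$ for every $w'$ of length $m$ and $i\neq j$; these are exactly the antisymmetry relations at the vertices created when each level-$m$ triangle $T_{w'}$ is subdivided. Using $S_3$-symmetry I would reduce to a representative pair and then distinguish the vertex types. Along a subdivided $(0,1)$-edge both $w'i$ and $w'j$ consist of $0$'s and $1$'s and fall under case (1); here $w'1=w'0+1$ as base-$2$ integers, so the index drops from $n$ to $n-1$, and equation (2.10) is engineered so that $b(w'0,1)=-b_{n-1}=-b(w'1,0)$, giving the cancellation directly from Algorithm 2.2. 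For a vertex lying deeper inside the face the two adjacent words share a prefix beginning $01$ or $00$, so cases (2) or (3) strip a common leading letter and send the pair $(w'i,w'j)$ to a pair $(0ui,0uj)$ of shorter words; the antisymmetry then follows from the level-$m$ instance of (2.3) with prefix $0u$ of length $m-1$.

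The main obstacle is the exhaustiveness of this case analysis together with the vertices at which the two adjacent triangles fall under different cases of Algorithm 2.3 — for instance a $(1,2)$-midpoint where one neighbor is an all-$0$/$1$ word (case (1)) while the other contains both a $1$ and a $2$ (case (3)). There the cancellation is no longer visible from a single reduction and must be extracted from the explicit base-$2$-to-base-$3$ formula for $b_n$. Verifying that these mixed-case relations collapse to $0$ (already for $w'=0^m$, where both relevant values vanish, consistent with the remark that $b(011,1)=b(022,2)=0$) is where the delicate bookkeeping lies, and it is precisely the step that pins down the correct sign in (2.11).
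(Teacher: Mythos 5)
Your outline follows the same route as the paper's own proof: induction on the level, with (2.2) and (2.4) read directly off the algorithms, and (2.3) split according to which cases of Algorithm 2.3 govern the two triangles meeting at a vertex. The parts you complete are correct, and two of your side remarks actually improve on the paper: the consistency check on the overlap of cases (1) and (2) (via $2^m-(1v)_2=2^{m-1}-(v)_2$), and the observation that (2.10) must read $b(0\tilde w,2)=3-b_n+b_{n-1}$. That sign correction is right: from Figure 1, $b(000,0)=27$ and $b(000,1)=b(000,2)=-12$, so $27-12-12=3$ as (2.2) requires, whereas the printed sign would give $b(000,2)=-36$. (Your equation numbers are shifted by one relative to the paper's, e.g.\ your ``(2.10)'' is the paper's (2.9), but the formulas you write make the intent unambiguous.) Your treatment of the face-edge vertices via $b(w'0,1)=-b_{n-1}=-b(w'1,0)$ and of deep interior vertices by stripping a common leading letter also coincides with the paper's.

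Nevertheless there is a genuine gap, and you have located it yourself without closing it: condition (2.3) at the vertices where a case-(1) triangle meets a case-(2)/(3) triangle is announced as ``the main obstacle'' but never proved, and this is not a routine leftover — it is the only place in the theorem where real work happens. Concretely, for $w'=0u$ a word in the digits $\{0,1\}$ and $a\in\{0,1\}$, the condition at the $(a,2)$-midpoint is $b(0w'a,2)+b(0w'2,a)=0$; the first summand is defined \emph{only} by the explicit formula of case (1) (the word $0w'a$ begins with $00$ and contains no digit $2$, so no reduction applies to it), while the second is defined by a chain of case-(2)/(3) reductions, so no appeal to the inductive hypothesis can ever relate the two. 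The paper closes this with a two-part argument your proposal lacks: for $a=1$ the index $n$ is odd, so $b_n-b_{n-1}=3$ (binary expansions of $n$ and $n-1$ differ only in the last digit), hence the corrected (2.10) gives $b(0w'1,2)=0$, while $b(0w'2,1)=0$ follows from a \emph{local symmetry} property of the $b$-values (invariance under the $1\leftrightarrow2$ swap within sub-cells), which must be formulated and carried along as an extra induction hypothesis; for $a=0$ one shows the two summands are negatives of each other (e.g.\ $b(0010,2)=-3$ against $b(0012,0)=3$), again pairing the explicit base-$2$-to-base-$3$ formula on one side with local symmetry on the other. Separately, the vertex where the two lower thirds meet requires the identity $b(012\cdots2,2)=b(021\cdots1,1)=0$, proved by its own small induction. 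The missing ingredient is thus not ``delicate bookkeeping'' but a precise auxiliary lemma (local symmetry plus these vanishing identities); without it, (2.3) remains unverified at all the mixed-case vertices of every $P_m$. It is telling that the paper's own write-up of exactly this step contains compensating sign typos ($b_n+b_{n-1}=3$ for $b_n-b_{n-1}=3$, matching the sign error in (2.10)) — this is where the proof is fragile, and it is the part a complete argument must spell out.
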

\begin{proof}
Since we know this holds for $m=2,3$ we may proceed by induction. Every triangle at level $m+1$ in cases $(2)$ and $(3)$ has identical $b$-values to a triangle at level $m$, so (2.2) holds. For triangles in case $(1)$ we obtain (2.2) by adding (2.8), (2.9), and (2.10). Also (2.4) holds by Algorithm 2.2.

It remains to verify (2.3). For vertices in the interior of the lower thirds this follows by the induction hypothesis. For the vertex where they intersect this is the condition $b(0122\dots2,2)+b(0211\dots1,1)=0$, but it is easy to see by induction that both values are zero. For vertices along the left and right edges (2.3) is an immediate consequence of (2.8) and (2.9).

If $T$ is a triangle along the right edge we will verify (2.3) at the bottom left vertex. Here we consider two separate cases. If $\tilde{w}$ ends in the digit $1$ then $b_n+b_{n-1}=3$, and so (2.10) means $b(0\tilde{w},2)=0$. If we write $\tilde{w}=w'1$ then $b(0w'2,1)=0$ by local symmetry (easily established by induction) so (2.3) holds. On the other hand, if $\tilde{w}$ ends in the digit $1$, then (2.6) implies $b(0\tilde{w},2)=-b_{n-1}$ while the $b$-value on the adjacent is $b_{n-1}$ by local symmetry. Finally, for vertices further in the interior, condition (2.3) follows by induction.
\end{proof}
\subsection{Computing the sidelengths}
After we compute the $b$-values inductively, we get all the angles from (2.1) and set up a system of equations using the law of sines, symmetry, and norming the distance between the points associated to the angles $\alpha_0(T_0)$ and $\alpha_1(T_1)$ to be $1$, where $0=(0,\dots,0)$, $1=(1,\dots,1)$. The system will be uniquely solved for all the sidelengths. We also compute the area of every subtriangle that will be needed for the FEM.
\subsection{Assembling the polyhedra}
We use the sidelengths and angles to display the net of a level $m$ polyhedron, that will be triangulated using \cite{JS}. Figure \ref{fig:sgface} shows the low-level triangulation of one SG face.
\vspace{0.2cm}
\begin{figure}[H]
\minipage{0.32\textwidth}
\centering
  \includegraphics[width=3.7cm]{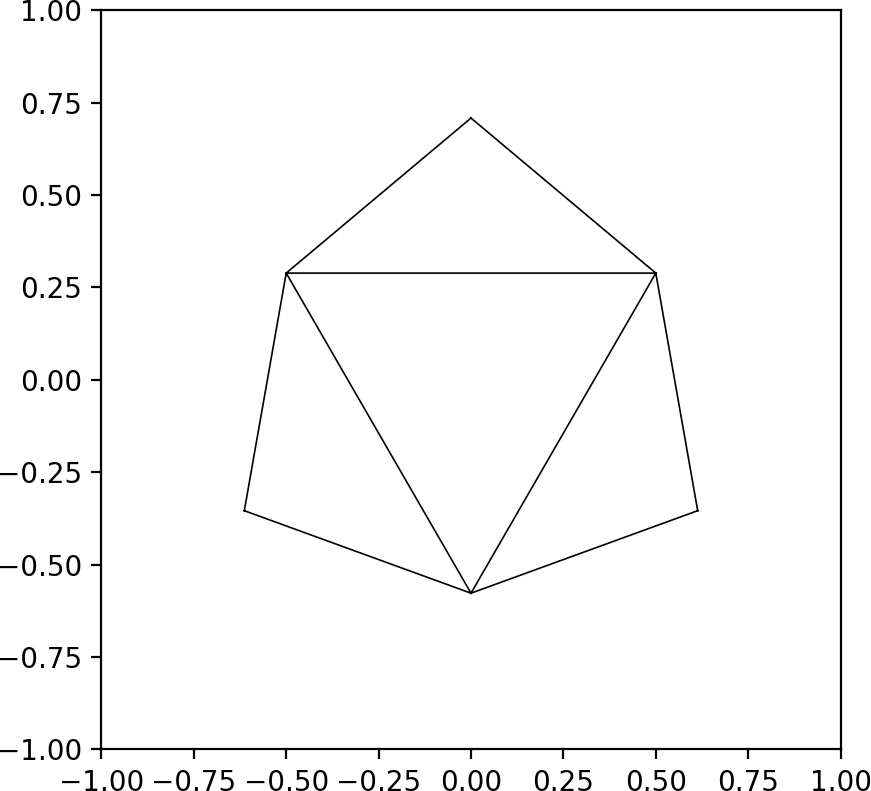}
\endminipage\hfill
\minipage{0.32\textwidth}
\centering
  \includegraphics[width=3.7cm]{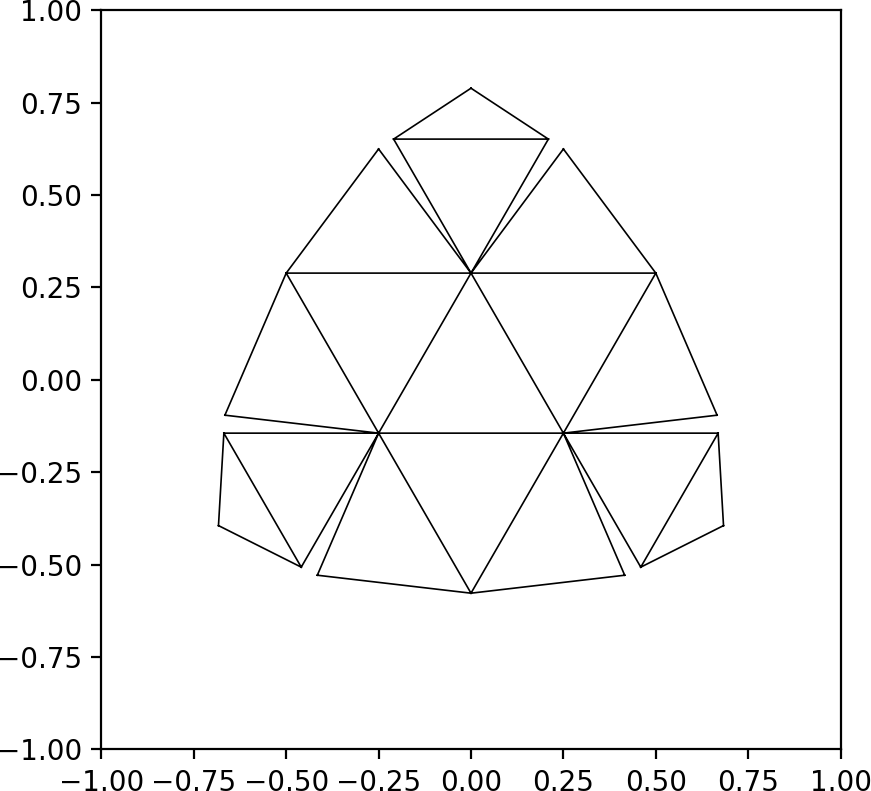}
\endminipage\hfill
\minipage{0.32\textwidth}
\centering
  \includegraphics[width=3.7cm]{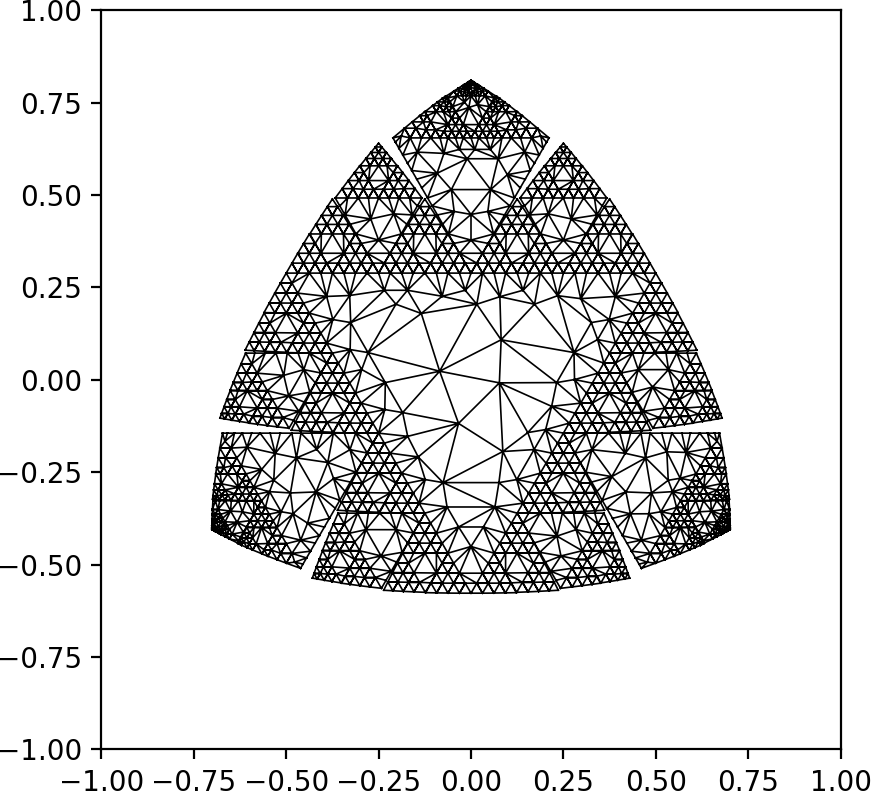}
\endminipage\hfill
\caption{Triangulation of one SG-face of level $m=1, 2, 6$}
\label{fig:sgface}
\end{figure}
%
%
\clearpage
To put the 4 SG faces and 4 flat faces together, we will need make identifications "inside" the SG-faces to stich up the small slits and also identify the sides of SG-faces and flat faces. Figure \ref{fig:all} shows the complete mesh with and without identifications, where two vertices are identified when they are connected by a blue line. This is done in a way that the 2*2 left faces are rotated $90^\circ$ to the right and put "on top" of the 2*2 faces on the right to form the polyhedron.
\begin{figure}[H]
\minipage{\textwidth}
\centering
\includegraphics[width=0.8\linewidth]{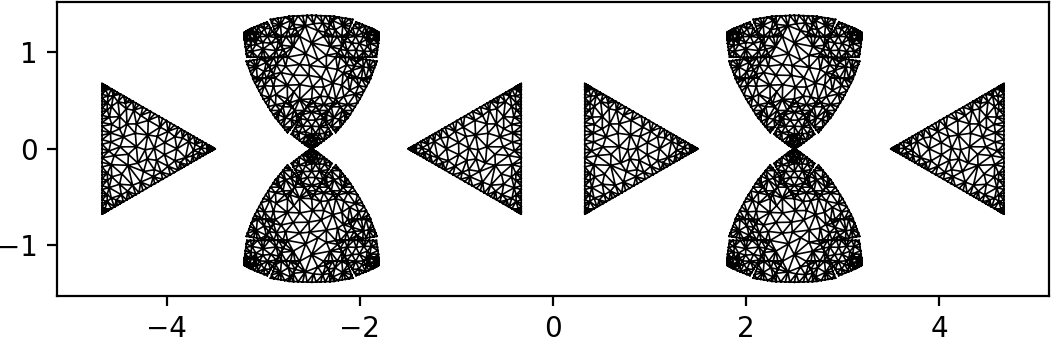}
\endminipage\hfill
\vspace{0.2cm}
\minipage{\textwidth}
\centering
\includegraphics[width=0.8\linewidth]{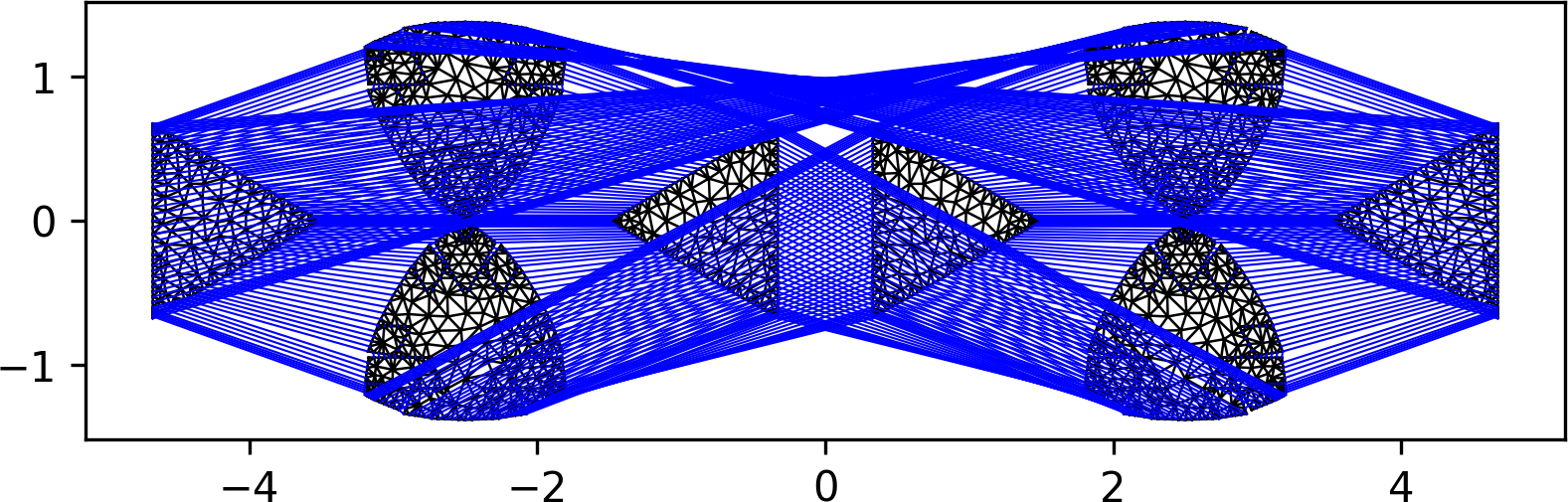}
\endminipage\hfill
\caption{The complete mesh with identifications $m=2$}
\label{fig:all}
\end{figure}
We numerically fold up the net using \cite{BI} to display the polyhedra in 3D. Figure \ref{fig:3d} shows level $m=1,2,3$, where for $m=3$ only the edges belonging to the flat faces are shown. Note that for $m\geq2$ folding occurs on the inner flat faces too, but we won't show them since they don't contribute to the curvature. Although there is zero curvature on the non-SG (flat) faces, the vertices added to the SG-faces cause the non-SG faces to fold. This creates trapazoids which must be triangulated for the FEM.

The solid has 3 symmetry axes through two opposing tops (by $\pi$) and 4 symmetry axes through the middles of two opposing faces (one flat, one curved) by $\frac{2}{3}\pi$ and $\frac{4}{3}\pi$. It's symmetry group is $S_4$ with order 24. We represent it as
\begin{equation*}
<a,b:a^2=1,b^3=1,(ab)^4=1>
\end{equation*}
and write the generators as matrices in $O(3)$ in a coordinate systems where the center of the solid is at the origin and the coordinate axes run through the vertices where 4 faces meet. Choose
\begin{equation*}
a=\begin{pmatrix} -1 & 0 & 0\\ 0 & -1 & 0 \\ 0 & 0& 1 \end{pmatrix}\text{, }b=\begin{pmatrix} 0 & 0 & 1\\ 1 & 0 & 0 \\ 0 & 1& 0 \end{pmatrix}
\end{equation*}
then $a$ corresponds to a reflection through the z-axis and $b$ fixes an opposing pair of vertices and cyclically permutes the others. The group $S_4$ has 2 irreducible representations of degree 1, 1 of degree 2, and 2 of degree 3.
\begin{figure}[!htb]
\minipage{0.32\textwidth}
  \includegraphics[width=3.7cm]{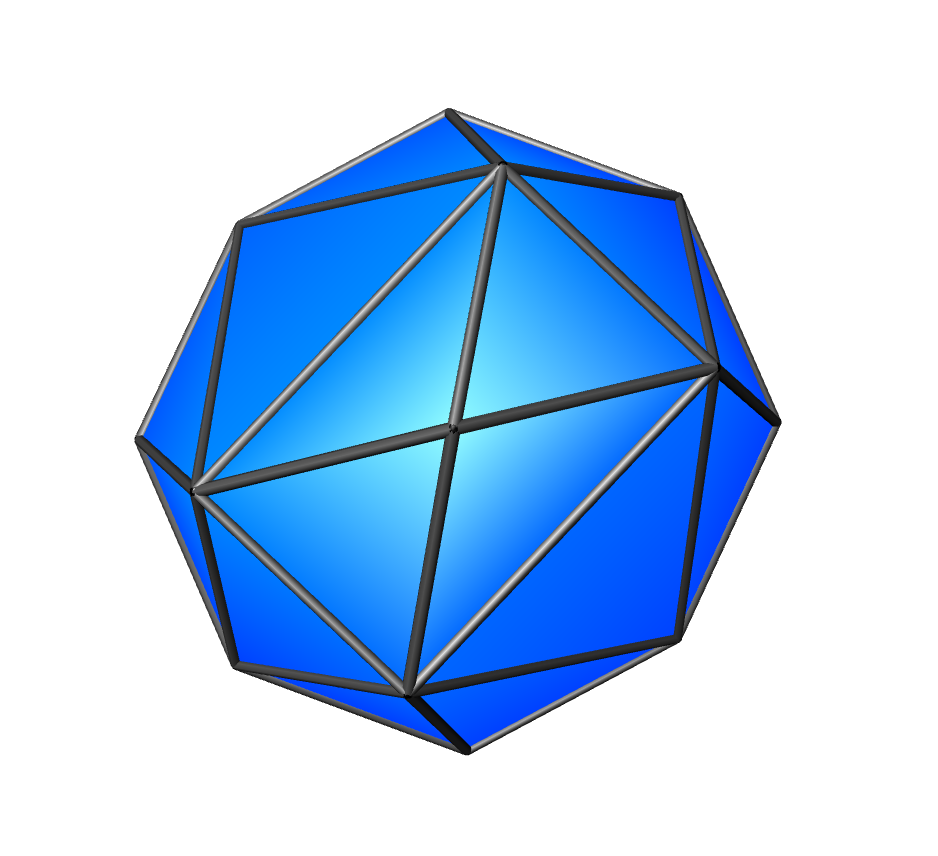}
\endminipage\hfill
\minipage{0.32\textwidth}
  \includegraphics[width=3.7cm]{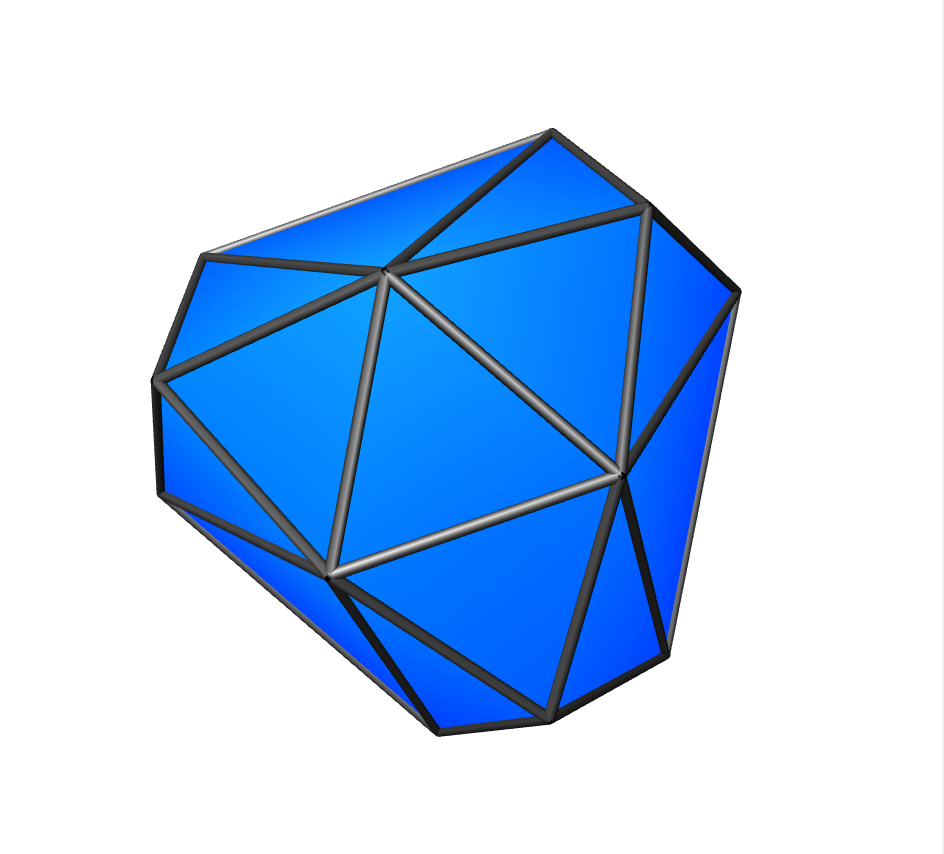}
\endminipage\hfill
\minipage{0.32\textwidth}
  \includegraphics[width=3.7cm]{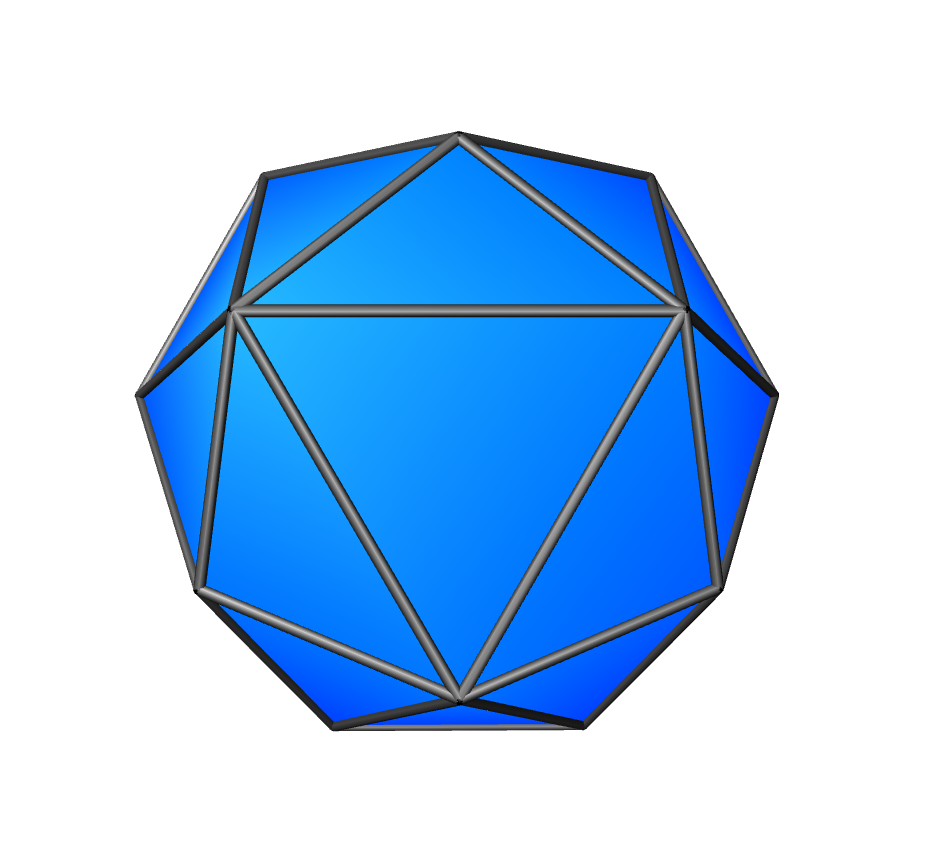}
\endminipage\hfill
\minipage{0.32\textwidth}
  \includegraphics[width=3.7cm]{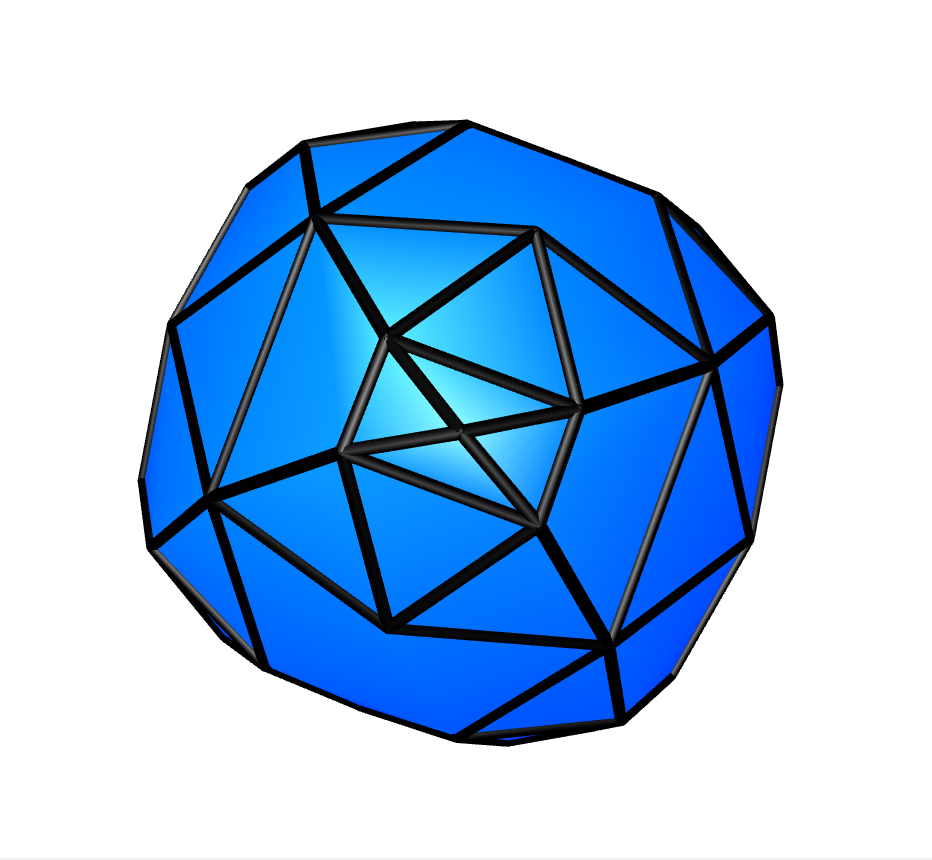}
\endminipage\hfill
\minipage{0.32\textwidth}
  \includegraphics[width=3.7cm]{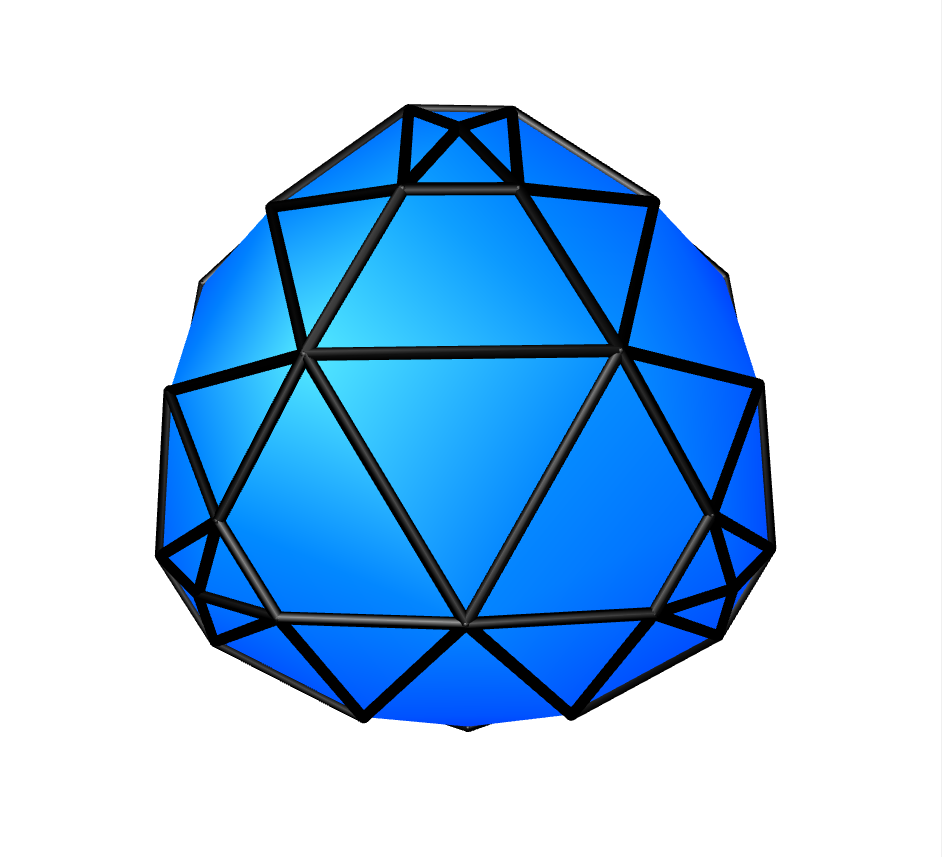}
\endminipage\hfill
\minipage{0.32\textwidth}
  \includegraphics[width=3.7cm]{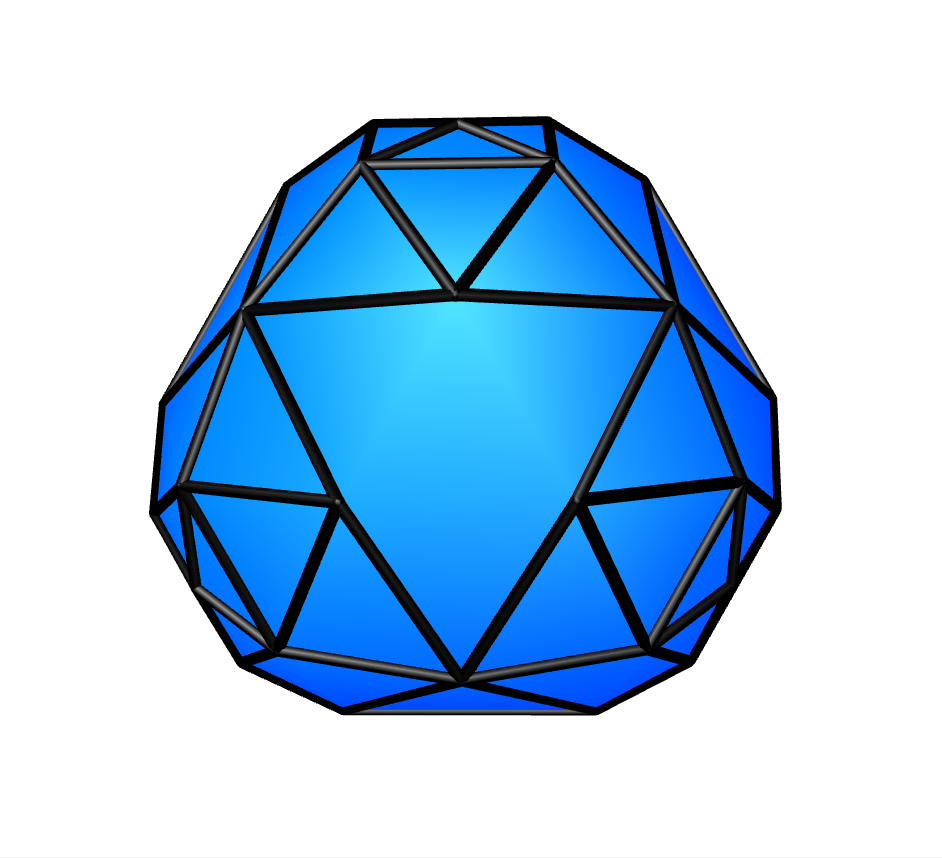}
\endminipage\hfill
\caption{Polyhedron of level 1 (top) and level 2 (bottom)}
\label{fig:3d}
\end{figure}
\section{FEM}
To implement the finite element method, we need a Gram matrix and an energy matrix which are sparse matrices filled with contributions from areas of triangles for each of the vertices. We compute the beginning of the spectrum for different levels and increasing subdivisions.
\subsection{Eigenvalues}
We approximate the spectrum of the Laplacian on the surface by extrapolating over the spectrum of lower level polyhedra. The Laplacian on the surface is just the usual two dimensional Laplacian $\Delta = \frac{\partial^2}{\partial x^2} + \frac{\partial^2}{\partial y^2}$ on the planar realization, such that values and normal derivatives along the identified edges match. By the spectrum we mean a study of both the eigenvalues $\lambda$ and and eigenfunctions $u$ satisfying 
\begin{equation}
\label{eq:cond}
-\Delta u = \lambda u\text{.}
\end{equation}
It is known that the eigenvalues form an increasing sequence $0 = \lambda_0 < \lambda_1 \leq \lambda_2 \leq \ldots$ tending to infinity, and satisfying the Weyl asymptotic law
\begin{equation}
\label{eq:weyl}
N(t) = \#\{\lambda_j \leq t\} \sim \frac{A}{4\pi}t
\end{equation}
where $A$ is the area of the surface. 
In Figures \ref{fig:cf1}-\ref{fig:cf3} we show the graphs of the eigenvalue counting function $N(t)$, the difference
\begin{equation}
D(t) = N(t) -  \frac{A}{4\pi}t\text{,}
\end{equation}
and the average of the difference
\begin{equation}
A(t) = \frac{1}{t}\int_0^t D(s)\text{ ds}
\end{equation}
based on our numerical computations.

\clearpage

\begin{figure}[H]
\includegraphics[width=\linewidth]{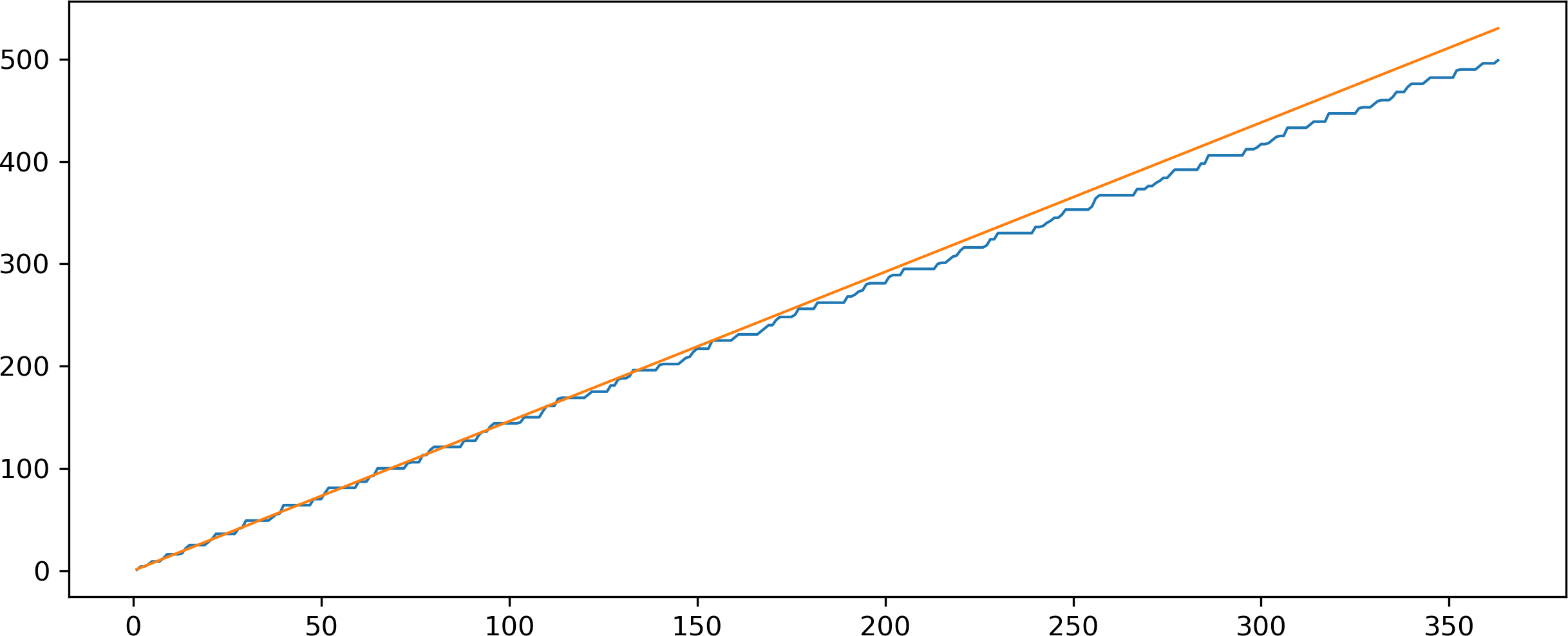}
\vspace{-0.3cm}
\caption{The eigenvalue counting function (blue) and the Weyl term (orange)}
\label{fig:cf1}
\end{figure}

\begin{figure}[H]
\includegraphics[width=\linewidth]{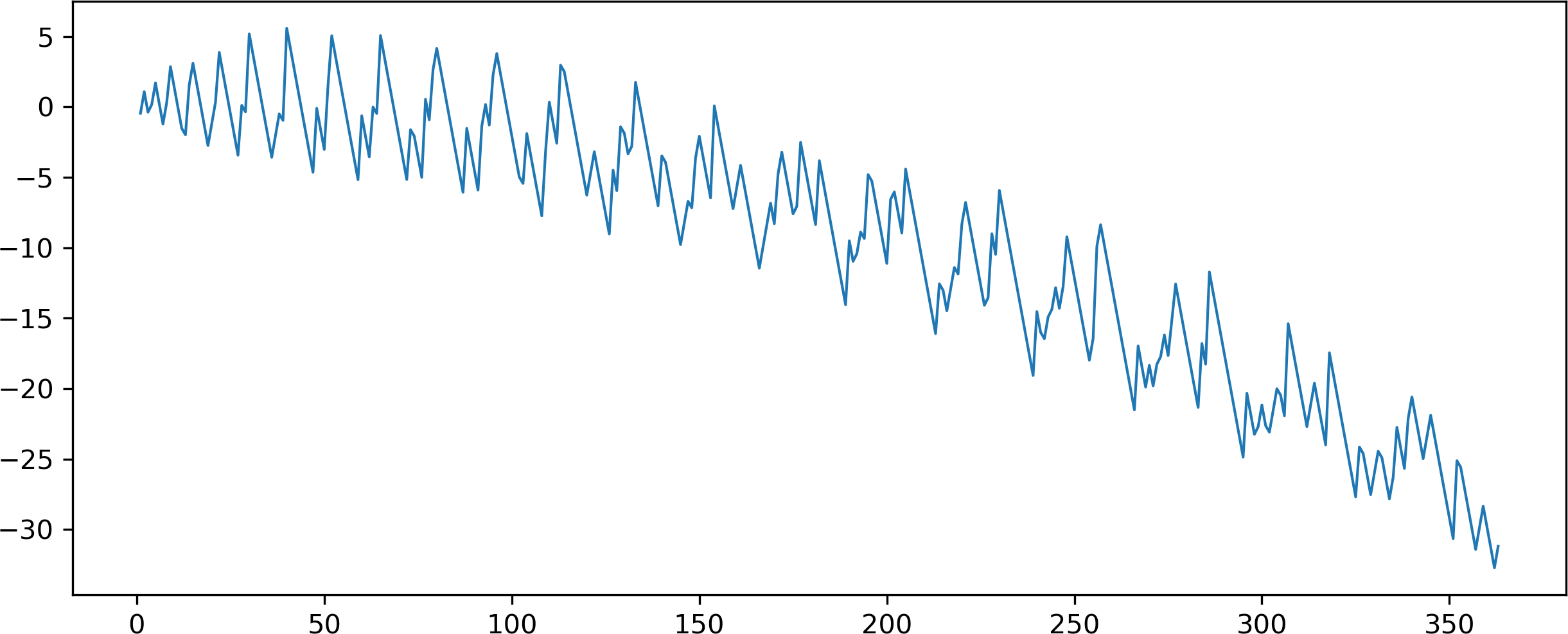}
\vspace{-0.3cm}
\caption{The difference $D(t)$}
\label{fig:cf2}
\end{figure}

\begin{figure}[H]
\includegraphics[width=\linewidth]{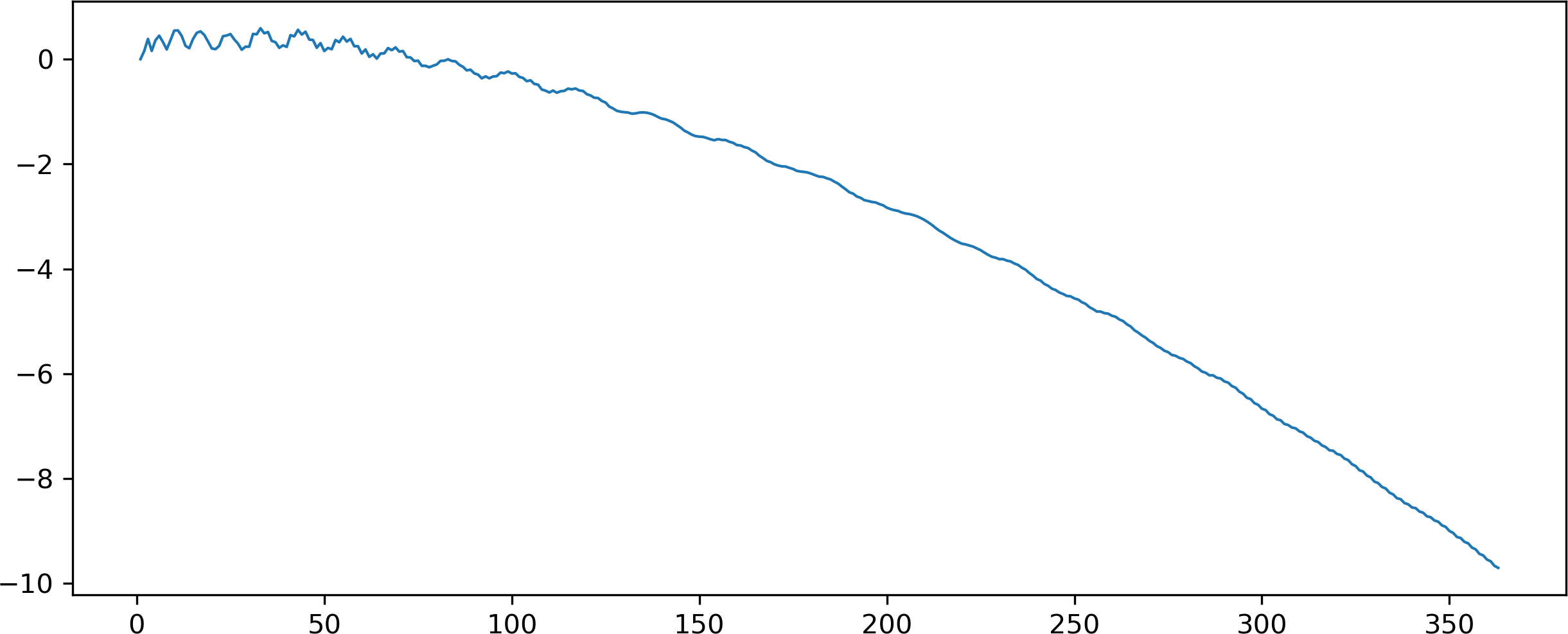}
\vspace{-0.3cm}
\caption{The averaged distance $A(t)$}
\label{fig:cf3}
\end{figure}

\clearpage

The multiplicities at the beginning of the spectrum of level are shown in Figure \ref{fig:gr2}. They are distributed as follows: $\frac{1}{12}$ of eigenvalues have multiplicity 1, $\frac{1}{6}$ have multiplicity 2 and $\frac{3}{4}$ have multiplicity 3. This corresponds to the squares of dimensions of irreducible representations (1, 2 and 3) of the symmetry group $S_4$.

We notice a pattern of multiplicities around consecutive multiplicity 1 occurances (3-3-1-1-3) starting at positions 9, 38, 105, 198... and found that for the corresponding pair of eigenfunctions the individual faces are either invariant under all symmetries or just under one (skew)symmetric reflection. 
\begin{figure}[H]
\includegraphics[width=\linewidth]{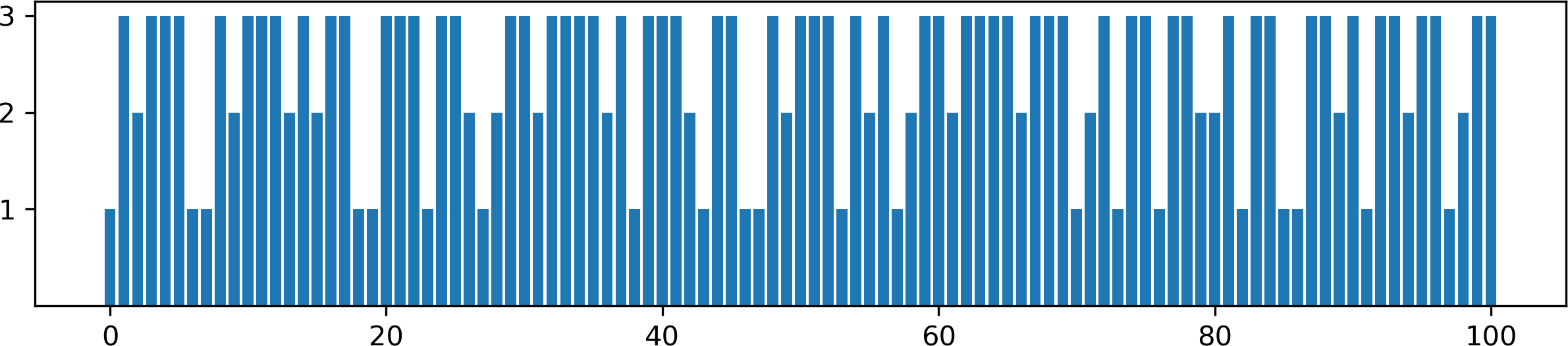}
\vspace{-0.3cm}
\caption{Multiplicities at the beginning of the spectrum of level 7}
\label{fig:gr2}
\end{figure}

We extrapolate exponentially using levels 4 to 7. Figure \ref{fig:gr1} shows the beginning of the spectrum for the first levels. 
\begin{figure}[H]
\includegraphics[width=\linewidth]{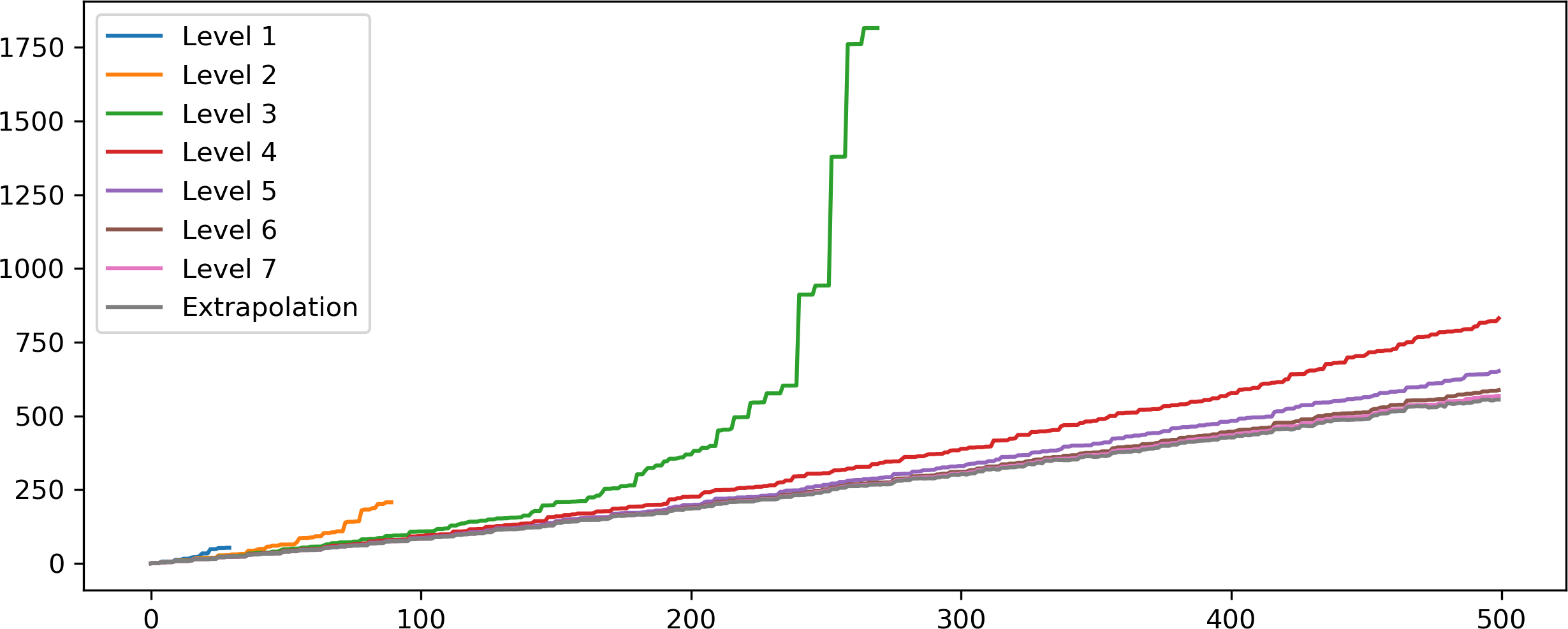}
\vspace{-0.3cm}
\caption{Beginning of the spectrum}
\label{fig:gr1}
\end{figure}
Tables 1-4 show the beginning of the spectrum for level 1 to 7 without subdivisions. The numbering in the left column refers to Level 7. Note that below the double-horizontal line on the next page the multiplicities of level $6$ and $7$ do not match any more.

\newpage

\begin{table}[H]
\centering
\begin{tabular}{|c|c|c|c|c|c|} 
\hline
\#      & Level 4    & Level 5    & Level 6    & Level 7    & Extrap.  \\ 
\hline
0       & 0 (1)      & 0 (1)      & 0 (1)      & 0 (1)      & 0.00    \\
1-3     & 1.37 (3)   & 1.37 (3)   & 1.37 (3)   & 1.37 (3)   & 1.37    \\
4-5     & 3.93 (2)   & 3.92 (2)   & 3.91 (2)   & 3.91 (2)   & 3.91    \\
6-8     & 4.37 (3)   & 4.35 (3)   & 4.34 (3)   & 4.34 (3)   & 4.34    \\
9-11    & 8.38 (3)   & 8.3 (3)    & 8.28 (3)   & 8.27 (3)   & 8.26    \\
12-14   & 8.56 (3)   & 8.49 (3)   & 8.48 (3)   & 8.47 (3)   & 8.47    \\
15      & 9.41 (1)   & 9.34 (1)   & 9.31 (1)   & 9.3 (1)    & 9.30    \\
16      & 13.68 (1)  & 13.5 (1)   & 13.44 (1)  & 13.42 (1)  & 13.42   \\
17-19   & 14.53 (2)  & 14.35 (2)  & 14.28 (3)  & 14.24 (3)  & 14.23   \\
20-21   & 14.62 (3)  & 14.36 (3)  & 14.3 (2)   & 14.28 (2)  & 14.27   \\
22-24   & 15.47 (3)  & 15.28 (3)  & 15.23 (3)  & 15.21 (3)  & 15.20   \\
25-27   & 21.33 (3)  & 20.9 (3)   & 20.77 (3)  & 20.72 (3)  & 20.71   \\
28-30   & 23.14 (3)  & 22.54 (3)  & 22.33 (3)  & 22.25 (3)  & 22.21   \\
31-32   & 23.24 (2)  & 22.93 (2)  & 22.86 (2)  & 22.85 (2)  & 22.84   \\
33-35   & 24.2 (3)   & 23.65 (3)  & 23.48 (3)  & 23.42 (3)  & 23.40   \\
36-37   & 31.14 (2)  & 29.95 (2)  & 29.56 (2)  & 29.41 (2)  & 29.35   \\
38-40   & 31.4 (3)   & 30.66 (3)  & 30.42 (3)  & 30.34 (3)  & 30.30   \\
41-43   & 33.69 (3)  & 32.88 (3)  & 32.64 (3)  & 32.57 (3)  & 32.55   \\
44      & 33.98 (1)  & 33.1 (1)   & 32.85 (1)  & 32.77 (1)  & 32.74   \\
45      & 35.47 (3)  & 34.04 (1)  & 33.37 (1)  & 33.1 (1)   & 33.04   \\
46-48   & 36.02 (1)  & 34.4 (3)   & 34.12 (3)  & 34.03 (3)  & 34.00   \\
49-51   & 42.5 (3)   & 40.83 (3)  & 40.28 (3)  & 40.08 (3)  & 39.99   \\
52-54   & 43.74 (3)  & 41.97 (3)  & 41.4 (3)   & 41.2 (3)   & 41.11   \\
55      & 46.14 (1)  & 44.87 (1)  & 44.53 (1)  & 44.44 (1)  & 44.41   \\
56-58   & 47.63 (3)  & 45.97 (3)  & 45.32 (3)  & 45.09 (3)  & 44.96   \\
59-61   & 48.13 (3)  & 46.0 (3)   & 45.56 (3)  & 45.43 (3)  & 45.42   \\
62-63   & 49.39 (2)  & 47.4 (2)   & 46.83 (2)  & 46.63 (2)  & 46.56   \\
64      & 54.45 (1)  & 52.24 (1)  & 51.52 (1)  & 51.26 (1)  & 51.14   \\
65-66   & 56.42 (2)  & 54.0 (2)   & 53.24 (2)  & 52.97 (2)  & 52.86   \\
67-69   & 59.78 (3)  & 56.0 (3)   & 54.77 (3)  & 54.3 (3)   & 54.09   \\
70-72   & 60.65 (3)  & 58.25 (3)  & 57.59 (3)  & 57.39 (3)  & 57.33   \\
73-74   & 62.74 (2)  & 60.31 (2)  & 59.67 (2)  & 59.5 (2)   & 59.44   \\
75-77   & 64.76 (3)  & 61.66 (3)  & 60.76 (3)  & 60.47 (3)  & 60.36   \\
78-80   & 67.09 (3)  & 63.17 (3)  & 61.94 (3)  & 61.49 (3)  & 61.31   \\
81-83   & 74.27 (3)  & 69.87 (3)  & 68.4 (3)   & 67.87 (3)  & 67.62   \\
84-86   & 76.3 (3)   & 71.39 (3)  & 69.77 (3)  & 69.18 (3)  & 68.91   \\
87-88   & 79.09 (2)  & 75.35 (2)  & 74.21 (2)  & 73.84 (2)  & 73.69   \\
89-91   & 79.66 (3)  & 76.28 (3)  & 75.46 (3)  & 75.29 (3)  & 75.22   \\
92      & 79.76 (1)  & 76.64 (1)  & 75.77 (1)  & 75.56 (1)  & 75.47   \\
93-95   & 81.06 (3)  & 77.1 (3)   & 76.0 (3)   & 75.68 (3)  & 75.56   \\
96-98   & 91.32 (3)  & 85.35 (3)  & 83.34 (3)  & 82.62 (3)  & 82.27   \\
99-101  & 92.9 (3)   & 86.03 (3)  & 84.23 (3)  & 83.7 (3)   & 83.53   \\
102-103 & 95.67 (2)  & 87.53 (2)  & 84.91 (2)  & 83.94 (2)  & 83.52   \\
104     & 96.88 (3)  & 88.62 (1)  & 85.55 (1)  & 84.41 (1)  & 83.78   \\
105-107 & 98.31 (1)  & 91.31 (3)  & 89.69 (3)  & 89.2 (3)   & 89.00   \\
108-110 & 98.48 (3)  & 92.63 (3)  & 91.11 (3)  & 90.72 (3)  & 90.58   \\ \hhline{|=|=|=|=|=|=|}
111     & 98.52 (1)  & 94.12 (1)  & 92.85 (2)  & 91.65 (1)  & 91.58   \\
112     & 107.97 (1) & 96.25 (1)  & 97.62 (3)  & 92.49 (1)  & 92.05   \\
\hline
\end{tabular}
\vspace{0.2cm}
\caption{Beginning of the spectrum for different level $m$ and the extrapolation}
\end{table}

\begin{table}[H]
\centering
\begin{tabular}{|c|c|c|c|c|c|} 
\hline
\#      & Level 4    & Level 5    & Level 6    & Level 7    & Extrap.  \\ 
\hline
113-115 & 108.08 (3) & 99.98 (3)  & 99.45 (2)  & 96.85 (3)  & 96.57   \\
116-117 & 109.2 (2)  & 101.54 (2) & 102.01 (3) & 98.83 (2)  & 98.61   \\
118-120 & 115.67 (3) & 105.42 (3) & 103.18 (3) & 100.8 (3)  & 100.22  \\
121-123 & 116.6 (3)  & 106.23 (3) & 107.44 (3) & 102.13 (3) & 101.76  \\
124-126 & 123.37 (1) & 111.19 (3) & 112.34 (1) & 106.15 (3) & 105.63  \\
127     & 123.85 (3) & 114.66 (1) & 115.74 (3) & 111.62 (1) & 111.43  \\
128-130 & 126.96 (3) & 118.98 (3) & 116.93 (2) & 114.69 (3) & 114.07  \\
131-132 & 129.26 (3) & 119.19 (3) & 117.02 (3) & 115.66 (2) & 115.60  \\
133-135 & 130.66 (1) & 120.7 (2)  & 118.45 (1) & 116.42 (3) & 116.31  \\
136     & 130.99 (2) & 121.55 (1) & 119.17 (2) & 117.44 (1) & 116.94  \\
137-138 & 134.71 (2) & 122.05 (2) & 122.73 (3) & 118.26 (2) & 118.11  \\
139-141 & 135.25 (3) & 125.54 (3) & 124.19 (3) & 121.89 (3) & 121.56  \\
142-144 & 143.13 (2) & 128.47 (3) & 127.58 (2) & 122.75 (3) & 122.23  \\
145-146 & 143.36 (3) & 131.36 (2) & 130.9 (3)  & 126.36 (2) & 125.81  \\
147-149 & 157.02 (3) & 136.57 (3) & 139.19 (3) & 128.95 (3) & 128.35  \\
150-152 & 159.47 (3) & 143.73 (3) & 144.08 (3) & 137.76 (3) & 137.22  \\
153-155 & 163.81 (3) & 148.98 (3) & 144.56 (3) & 141.95 (3) & 141.07  \\
156-158 & 166.1 (2)  & 150.23 (3) & 148.93 (2) & 142.95 (3) & 141.90  \\
159-160 & 169.13 (3) & 152.91 (2) & 149.55 (3) & 147.78 (2) & 147.48  \\
161-163 & 169.22 (3) & 154.06 (3) & 150.52 (3) & 148.11 (3) & 147.54  \\
164-166 & 170.58 (1) & 156.56 (3) & 151.88 (3) & 148.47 (3) & 147.52  \\
167-169 & 176.02 (3) & 156.8 (3)  & 154.91 (1) & 150.72 (3) & 150.28  \\
170     & 176.65 (3) & 158.31 (1) & 162.94 (2) & 154.11 (1) & 154.00  \\
171-172 & 185.4 (3)  & 168.55 (3) & 163.57 (3) & 160.66 (2) & 159.67  \\
173-175 & 186.44 (3) & 170.0 (2)  & 165.48 (3) & 162.11 (3) & 161.49  \\
176     & 192.37 (3) & 171.27 (3) & 166.23 (1) & 163.48 (1) & 162.28  \\
177-179 & 192.78 (2) & 171.28 (3) & 166.93 (3) & 163.8 (3)  & 163.21  \\
180-182 & 193.05 (1) & 173.2 (1)  & 167.66 (1) & 165.77 (3) & 165.61  \\
183     & 196.51 (1) & 174.11 (1) & 169.02 (3) & 166.12 (1) & 165.36  \\
184-186 & 198.24 (3) & 176.18 (3) & 173.13 (3) & 166.73 (3) & 165.61  \\
187-189 & 198.73 (3) & 179.28 (3) & 174.77 (2) & 171.33 (3) & 170.44  \\
190-191 & 201.58 (2) & 181.95 (2) & 180.61 (2) & 172.04 (2) & 170.53  \\
192-193 & 217.16 (3) & 188.64 (2) & 185.07 (3) & 178.51 (2) & 177.62  \\
194-196 & 221.2 (2)  & 192.53 (3) & 187.06 (1) & 182.85 (3) & 181.87  \\
197     & 225.43 (3) & 197.73 (3) & 189.17 (3) & 183.24 (1) & 181.14  \\
198-200 & 226.05 (3) & 197.93 (1) & 190.58 (3) & 186.46 (3) & 185.27  \\
201-203 & 226.14 (1) & 198.84 (3) & 194.71 (1) & 187.93 (3) & 186.83  \\
204     & 235.29 (1) & 201.4 (1)  & 196.43 (1) & 192.62 (1) & 192.46  \\
205     & 241.4 (3)  & 207.81 (1) & 199.31 (3) & 192.86 (1) & 190.92  \\
206-208 & 244.76 (1) & 209.82 (3) & 207.73 (3) & 195.86 (3) & 194.13  \\
209-211 & 248.45 (3) & 219.27 (2) & 208.8 (2)  & 203.62 (3) & 201.47  \\
212-213 & 249.14 (3) & 219.3 (3)  & 212.48 (3) & 205.39 (2) & 203.43  \\
214-216 & 249.79 (2) & 220.7 (3)  & 213.16 (3) & 208.68 (3) & 207.65  \\
217     & 254.04 (1) & 223.52 (1) & 213.48 (1) & 210.22 (1) & 208.48  \\
218-220 & 255.38 (3) & 223.54 (3) & 214.29 (3) & 210.97 (3) & 209.41  \\
221-223 & 257.26 (3) & 224.89 (3) & 216.61 (2) & 211.32 (3) & 209.67  \\
224-225 & 259.79 (3) & 225.39 (2) & 220.61 (3) & 214.18 (2) & 213.43  \\
226-228 & 261.87 (2) & 229.5 (3)  & 221.13 (3) & 217.85 (3) & 216.76  \\
\hline
\end{tabular}
\vspace{0.2cm}
\caption{Beginning of the spectrum for different level $m$ and the extrapolation}
\end{table}

\begin{table}[H]
\centering
\begin{tabular}{|c|c|c|c|c|c|} 
\hline
\#      & Level 4    & Level 5    & Level 6    & Level 7    & Extrap.  \\ 
\hline
229-231 & 264.86 (3) & 230.81 (3) & 222.42 (1) & 218.03 (3) & 216.94  \\
232     & 272.86 (1) & 233.29 (1) & 230.83 (2) & 219.63 (1) & 218.49  \\
233-234 & 274.65 (2) & 242.37 (2) & 232.62 (3) & 226.85 (2) & 224.58  \\
235-237 & 281.02 (3) & 246.9 (3)  & 235.64 (3) & 227.85 (3) & 225.39  \\
238-240 & 294.81 (2) & 247.35 (3) & 237.53 (2) & 232.05 (3) & 231.18  \\
241-242 & 295.55 (3) & 251.22 (2) & 242.32 (3) & 233.58 (2) & 231.71  \\
243-245 & 303.96 (3) & 257.87 (3) & 246.39 (3) & 236.83 (3) & 234.13  \\
246-248 & 304.15 (3) & 261.82 (3) & 251.53 (3) & 241.68 (3) & 238.64  \\
249-251 & 305.81 (3) & 266.33 (3) & 257.87 (3) & 246.83 (3) & 243.71  \\
252-254 & 310.2 (1)  & 270.84 (3) & 262.7 (3)  & 253.11 (3) & 250.95  \\
255-257 & 315.55 (3) & 276.12 (3) & 267.68 (2) & 258.49 (3) & 256.18  \\
258-259 & 317.26 (2) & 280.48 (2) & 268.27 (3) & 263.42 (2) & 261.59  \\
260-262 & 321.5 (3)  & 281.96 (1) & 270.92 (3) & 264.13 (3) & 261.71  \\
263-265 & 326.86 (3) & 282.89 (3) & 273.58 (1) & 265.73 (3) & 264.53  \\
266-268 & 327.19 (3) & 285.66 (3) & 273.72 (3) & 268.41 (3) & 266.90  \\
269-270 & 336.55 (3) & 287.59 (3) & 274.02 (2) & 269.02 (2) & 267.47  \\
271-273 & 341.09 (2) & 290.22 (2) & 274.57 (3) & 270.65 (3) & 268.31  \\
274     & 344.79 (3) & 292.16 (3) & 283.35 (1) & 271.51 (1) & 268.74  \\
275     & 345.82 (3) & 302.76 (3) & 287.54 (3) & 276.56 (1) & 273.96  \\
276-278 & 353.11 (1) & 303.45 (1) & 288.09 (3) & 282.66 (3) & 279.81  \\
279-281 & 360.97 (3) & 303.78 (3) & 295.23 (3) & 284.2 (3)  & 282.55  \\
282-284 & 360.99 (2) & 311.17 (3) & 295.96 (1) & 289.99 (3) & 287.58  \\
285     & 363.24 (3) & 312.4 (1)  & 296.93 (3) & 291.09 (1) & 288.59  \\
286-287 & 366.82 (1) & 315.92 (3) & 297.99 (2) & 291.65 (2) & 287.48  \\
288-290 & 370.24 (3) & 317.18 (2) & 301.67 (2) & 291.78 (3) & 288.43  \\
291-292 & 371.34 (3) & 322.13 (2) & 304.11 (3) & 295.19 (2) & 291.42  \\
293-295 & 376.36 (2) & 325.33 (3) & 310.04 (3) & 297.44 (3) & 294.41  \\
296     & 382.34 (1) & 328.38 (3) & 310.61 (3) & 302.76 (1) & 299.38  \\
297-299 & 383.35 (3) & 330.1 (3)  & 311.07 (1) & 303.7 (3)  & 300.61  \\
300-302 & 388.63 (3) & 335.19 (1) & 315.3 (1)  & 305.67 (3) & 302.57  \\
303     & 392.67 (3) & 336.88 (1) & 316.4 (2)  & 308.67 (1) & 303.89  \\
304-305 & 394.0 (3)  & 338.1 (2)  & 321.72 (3) & 309.82 (2) & 305.24  \\
306-308 & 395.84 (2) & 341.85 (1) & 325.79 (1) & 316.28 (3) & 312.01  \\
309     & 396.62 (1) & 341.87 (3) & 328.41 (3) & 321.57 (1) & 319.54  \\
310-311 & 416.57 (3) & 346.51 (3) & 328.87 (2) & 321.95 (2) & 318.15  \\
312-314 & 416.99 (3) & 351.96 (2) & 335.88 (3) & 322.38 (3) & 320.85  \\
315-317 & 422.45 (2) & 360.9 (3)  & 338.16 (3) & 328.08 (3) & 324.14  \\
318-320 & 424.11 (1) & 361.41 (3) & 341.41 (3) & 330.71 (3) & 325.64  \\
321-323 & 435.62 (2) & 366.5 (2)  & 345.84 (1) & 333.3 (3)  & 328.33  \\
324     & 435.77 (3) & 367.16 (3) & 347.11 (2) & 336.65 (1) & 333.41  \\
325     & 445.38 (3) & 374.47 (1) & 350.8 (1)  & 340.76 (1) & 338.34  \\
326-327 & 447.57 (3) & 376.51 (3) & 352.45 (3) & 341.27 (2) & 335.74  \\
328-330 & 449.1 (1)  & 379.87 (3) & 357.95 (3) & 345.02 (3) & 340.66  \\
331-333 & 450.18 (1) & 382.46 (3) & 360.47 (1) & 351.64 (3) & 348.31  \\
334     & 452.25 (3) & 384.72 (1) & 360.5 (3)  & 352.39 (1) & 349.10  \\
335-337 & 465.24 (1) & 387.13 (1) & 364.91 (3) & 354.74 (3) & 351.62  \\
338-340 & 468.7 (3)  & 396.03 (3) & 366.64 (3) & 355.08 (3) & 349.72  \\
341-343 & 469.18 (3) & 399.26 (2) & 372.59 (2) & 357.03 (3) & 352.18  \\
\hline
\end{tabular}
\vspace{0.2cm}
\caption{Beginning of the spectrum for different level $m$ and the extrapolation}
\end{table}

\begin{table}[H]
\centering
\begin{tabular}{|c|c|c|c|c|c|} 
\hline
\#      & Level 4    & Level 5    & Level 6    & Level 7    & Extrap.  \\ 
\hline
344-345 & 475.76 (2) & 399.32 (3) & 374.47 (3) & 364.75 (2) & 359.96  \\
346-348 & 481.12 (3) & 400.07 (3) & 376.35 (3) & 366.73 (3) & 363.02  \\
349-351 & 483.09 (2) & 405.95 (3) & 379.65 (3) & 367.49 (3) & 361.10  \\
352-354 & 489.5 (3)  & 407.43 (1) & 379.67 (1) & 370.32 (3) & 365.52  \\
355     & 493.52 (1) & 410.16 (3) & 390.25 (2) & 372.37 (1) & 367.35  \\
356-357 & 499.79 (3) & 422.95 (2) & 393.67 (3) & 380.68 (2) & 374.58  \\
358-360 & 509.36 (3) & 424.44 (3) & 395.82 (3) & 383.73 (3) & 377.65  \\
361-363 & 510.71 (2) & 430.47 (3) & 396.96 (3) & 384.32 (3) & 378.77  \\
364-366 & 511.2 (3)  & 433.33 (3) & 404.26 (3) & 386.75 (3) & 381.10  \\
367-369 & 521.12 (3) & 435.62 (2) & 405.4 (2)  & 392.64 (3) & 385.94  \\
370-371 & 521.55 (2) & 440.62 (2) & 407.96 (2) & 395.13 (2) & 388.87  \\
372-373 & 523.18 (3) & 441.97 (3) & 414.76 (1) & 398.75 (2) & 391.79  \\
374     & 526.68 (1) & 444.89 (1) & 416.44 (3) & 404.86 (1) & 398.66  \\
375-377 & 533.4 (3)  & 448.71 (3) & 418.93 (3) & 405.36 (3) & 398.59  \\
378-380 & 536.16 (3) & 458.1 (3)  & 425.65 (3) & 407.83 (3) & 402.84  \\
381-383 & 539.51 (3) & 459.87 (1) & 428.42 (3) & 414.1 (3)  & 408.64  \\
384-386 & 541.85 (1) & 462.15 (3) & 430.28 (1) & 419.44 (3) & 412.61  \\
387-388 & 547.75 (3) & 463.65 (3) & 432.0 (2)  & 421.21 (2) & 413.75  \\
389     & 549.18 (2) & 467.98 (3) & 433.65 (3) & 422.46 (1) & 415.84  \\
390-392 & 553.87 (3) & 470.79 (2) & 437.44 (3) & 423.35 (3) & 416.14  \\
393-395 & 559.46 (3) & 472.57 (3) & 444.03 (3) & 426.76 (3) & 419.63  \\
396-398 & 567.23 (3) & 480.13 (3) & 446.33 (3) & 432.79 (3) & 425.99  \\
399-401 & 574.44 (1) & 480.56 (1) & 449.68 (1) & 434.37 (3) & 427.32  \\
402-404 & 577.38 (3) & 483.45 (3) & 452.66 (2) & 439.61 (3) & 433.19  \\
405-407 & 588.63 (2) & 491.07 (3) & 453.22 (3) & 443.02 (3) & 435.24  \\
408-410 & 590.79 (3) & 493.44 (2) & 456.9 (3)  & 445.54 (3) & 437.45  \\
411-413 & 594.91 (3) & 494.94 (3) & 459.47 (3) & 447.86 (3) & 442.29  \\
414-415 & 606.12 (1) & 495.35 (2) & 460.8 (2)  & 450.21 (2) & 444.31  \\
416-418 & 609.45 (3) & 497.8 (3)  & 475.84 (1) & 461.66 (3) & 454.48  \\
419-421 & 612.36 (2) & 513.83 (1) & 476.74 (3) & 463.69 (3) & 455.97  \\
422     & 614.61 (3) & 515.77 (3) & 476.82 (3) & 463.91 (1) & 453.68  \\
423     & 623.89 (2) & 522.22 (1) & 478.85 (1) & 464.65 (1) & 454.90  \\
424-425 & 641.18 (3) & 524.61 (3) & 481.9 (2)  & 465.9 (2)  & 458.41  \\
426-427 & 641.7 (3)  & 530.62 (2) & 488.24 (2) & 474.77 (2) & 466.83  \\
428-430 & 650.4 (1)  & 536.23 (2) & 488.48 (3) & 474.9 (3)  & 465.56  \\
431     & 653.88 (3) & 536.69 (3) & 493.48 (1) & 477.32 (1) & 470.41  \\
432-434 & 657.49 (1) & 544.86 (3) & 498.91 (3) & 486.25 (3) & 476.44  \\
435-437 & 659.13 (2) & 545.65 (3) & 502.99 (3) & 488.8 (3)  & 482.00  \\
438-440 & 676.14 (3) & 547.0 (1)  & 506.87 (3) & 494.09 (3) & 486.49  \\
441-443 & 679.73 (2) & 551.04 (3) & 508.39 (2) & 494.58 (3) & 487.07  \\
444-445 & 681.0 (3)  & 552.17 (2) & 508.86 (3) & 495.16 (2) & 486.99  \\
446-448 & 698.25 (3) & 557.04 (3) & 510.66 (3) & 496.13 (3) & 487.99  \\
449-451 & 702.54 (1) & 559.13 (3) & 512.14 (3) & 498.7 (3)  & 488.65  \\
452     & 702.84 (3) & 563.66 (3) & 521.19 (1) & 504.46 (1) & 498.38  \\
453-454 & 708.14 (1) & 566.38 (1) & 523.5 (2)  & 509.05 (2) & 502.00  \\
455-457 & 715.85 (3) & 570.25 (2) & 527.58 (1) & 515.94 (3) & 507.25  \\
458     & 719.75 (3) & 577.52 (3) & 529.2 (3)  & 517.35 (1) & 509.18  \\
459-461 & 722.16 (3) & 578.34 (1) & 536.85 (3) & 521.81 (3) & 514.89  \\
\hline
\end{tabular}
\vspace{0.2cm}
\caption{Beginning of the spectrum for different level $m$ and the extrapolation}
\end{table}
\clearpage
\subsection{Eigenfunctions}
We display the eigenfunctions as described above in Fig. \ref{fig:all}, and study the symmetries at the two selected tops ($x>0$ and $x<0$) locally, and their relation to each other. The eigenfunctions may be sorted into symmetry types according to the irreducible representations of the symmetry group of the polyhedron. We will show examples of the eigenfunction symmetry types for level $1$ and $2$. Note, that the types of symmetries that occur in the eigenfunctions correspond to the irreducible representations of the symmetry group of the polyhedron, $S_4$, which has two 1-dim. representations, one 2-dim. representation and two 3-dim. representations.

There are two types of eigenfunctions of multiplicity 1, since there are two 1-dim. irreducible representations of $S_4$. One type is symmetrical with regard to both axes (Fig. \ref{fig:ex1}), the other has horizontal symmetry and is skew-symmetrical vertically or the other way around (Fig. \ref{fig:ex2}). This is equivalent to saying that one type has rotational symmetry and the other has rotational skew-symmetry by $\pi$ with regard to the origin.
\begin{figure}[H]
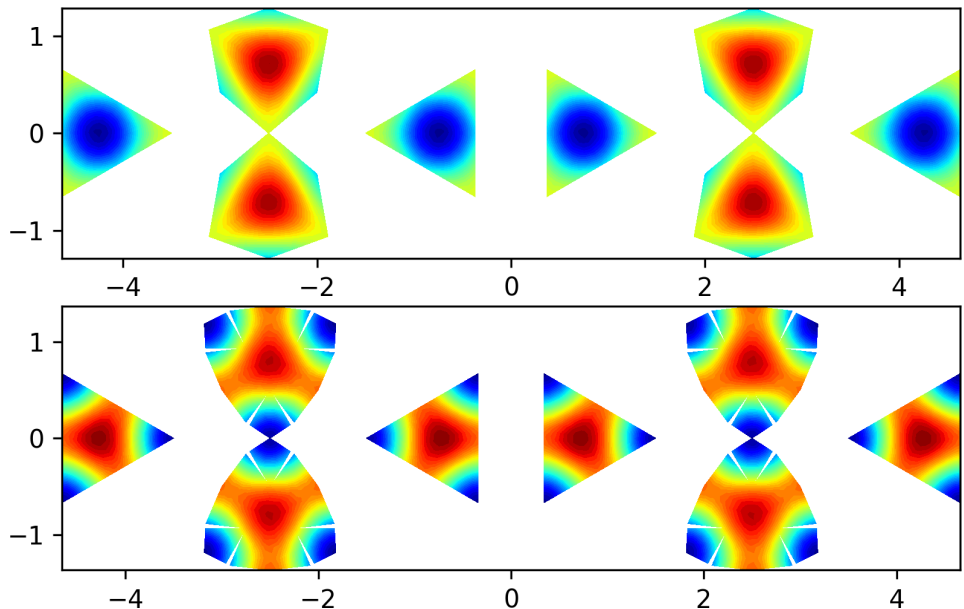

\minipage{\textwidth}
\centering
  \includegraphics[width=\linewidth]{fig/Pics_1_50/EF_1_15.png}
\endminipage
\vspace{0.1cm}
\minipage{\textwidth}
\centering
  \includegraphics[width=\linewidth]{fig/Pics_2_50/EF_2_16.png}
\endminipage\hfill
\caption{Eigenfunctions 15 ($m=1$) and 16 ($m=2$)}
\label{fig:ex1}
\end{figure}

\clearpage

\begin{figure}[H]
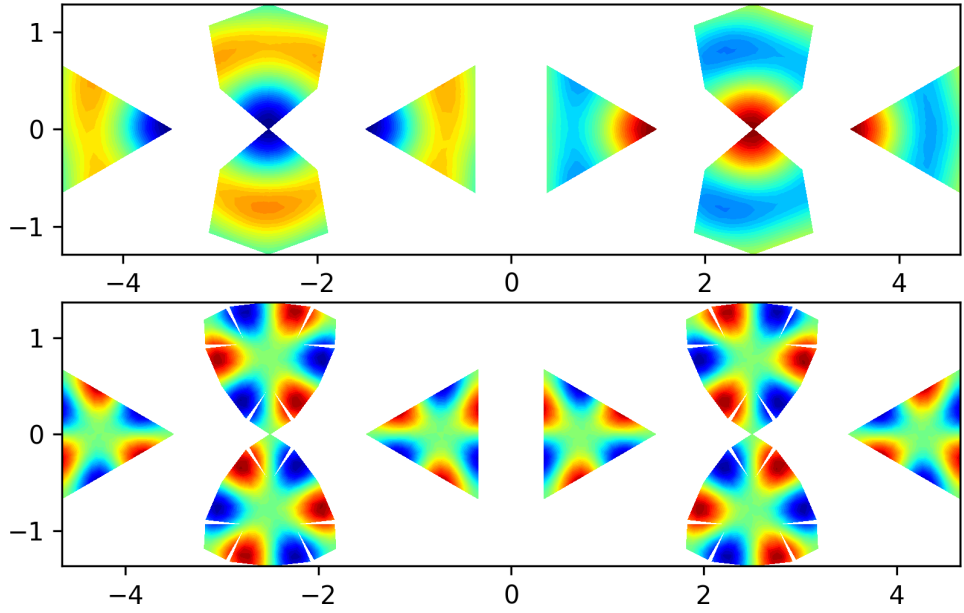

\minipage{\textwidth}
\centering
  \includegraphics[width=\linewidth]{fig/Pics_1_50/EF_1_14.png}
\endminipage
\vspace{0.1cm}
\minipage{\textwidth}
\centering
  \includegraphics[width=\linewidth]{fig/Pics_2_50/EF_2_41.png}
\endminipage\hfill
\caption{Eigenfunctions 14 ($m=1$) and 41 ($m=2$)}
\label{fig:ex2}
\end{figure}

\clearpage

There is one type of pairs of eigenfunctions from a 2-dimensional eigenspace. Both eigenfunctions from such a pair have horizontal symmetry. Individually, they seem to have rotational symmetry by $\frac{\pi}{2}$ between the tops. One eigenfunction in the 2-dimensional eigenspace is obtained from the other by rotation of $\pi$ and inverting on half. Two pairs for level $1$ and $2$ are shown in Fig. \ref{fig:ex3} and Fig. \ref{fig:ex4}.
\begin{figure}[H]
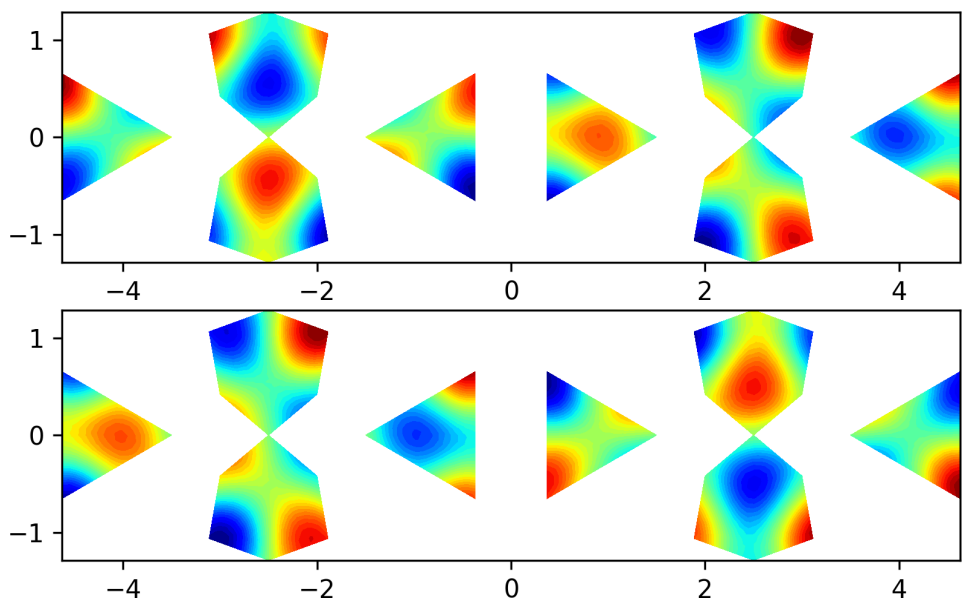

\minipage{\textwidth}
\centering
  \includegraphics[width=\linewidth]{fig/Pics_1_50/EF_1_12.png}
\endminipage
\vspace{0.1cm}
\minipage{\textwidth}
\centering
  \includegraphics[width=\linewidth]{fig/Pics_1_50/EF_1_13.png}
\endminipage\hfill
\caption{Eigenfunctions 12 and 13 of level $m=1$}
\label{fig:ex3}
\end{figure}
\begin{figure}[H]
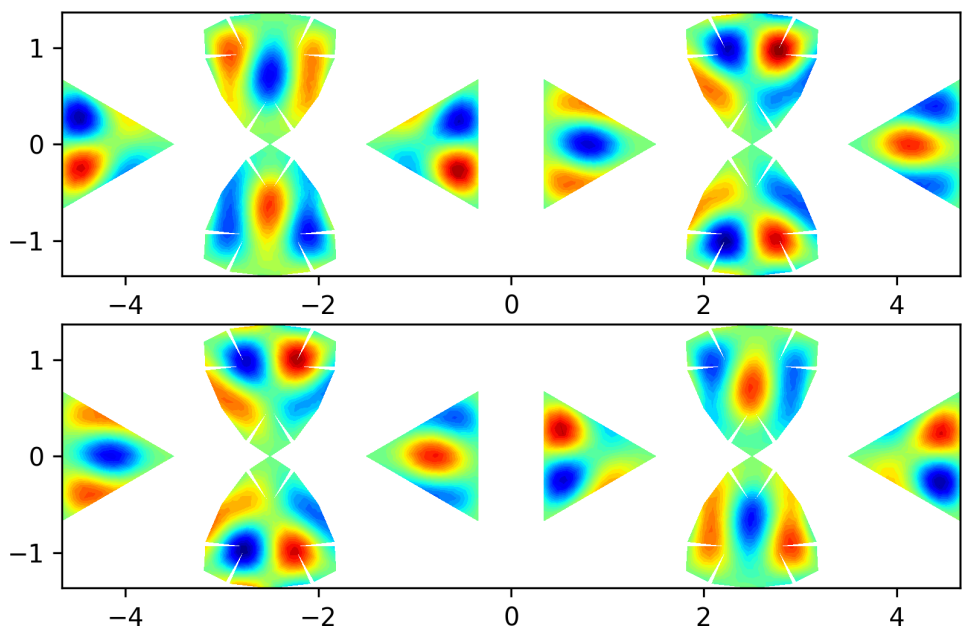

\minipage{\textwidth}
\centering
  \includegraphics[width=\linewidth]{fig/Pics_2_50/EF_2_42.png}
\endminipage
\vspace{0.1cm}
\minipage{\textwidth}
\centering
  \includegraphics[width=\linewidth]{fig/Pics_2_50/EF_2_43.png}
\endminipage\hfill
\caption{Eigenfunctions 42 and 43 of level $m=2$}
\label{fig:ex4}
\end{figure}

\clearpage

There are two types of eigenfunctions from an eigenspace of dimension 3. Eigenfunctions from the first type seem to have rotational symmetry by $\frac{\pi}{2}$ between the tops. One of them has horizontal symmetry, and vertical symmetry with respect to the vertical axes through the tops. The other two have horizontal skew-symmetry through the horizontal axes of one of the tops and vertical symmetry through the other (Fig. \ref{fig:ex5}). For the second type: one has horizontal and vertical skew-symmetry, one has vertical skew-symmetry and rotational skew-symmetry around the tops and the last one has vertical symmetry and rotational skew-symmetry around the tops (Fig. \ref{fig:ex6}).

More data and eigenfunctions can be found at \cite{W}.
\begin{figure}[H]
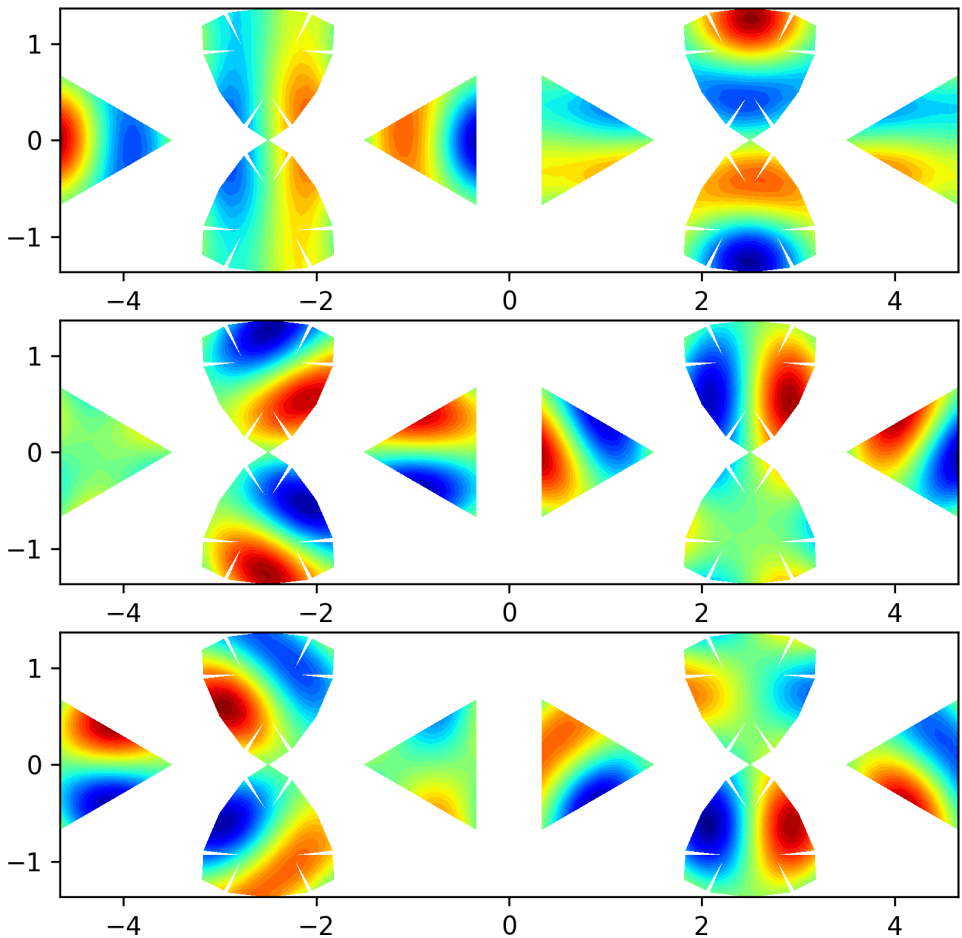

\minipage{\textwidth}
\centering
  \includegraphics[width=\linewidth]{fig/Pics_2_50/EF_2_13.png}
\endminipage
\vspace{0.1cm}
\minipage{\textwidth}
\centering
  \includegraphics[width=\linewidth]{fig/Pics_2_50/EF_2_12.png}
\endminipage
\vspace{0.1cm}
\minipage{\textwidth}
\centering
  \includegraphics[width=\linewidth]{fig/Pics_2_50/EF_2_14.png}
\endminipage\hfill
\caption{Eigenfunctions 12, 13, 14 of level $m=2$}
\label{fig:ex5}
\end{figure}

\clearpage

\begin{figure}[H]
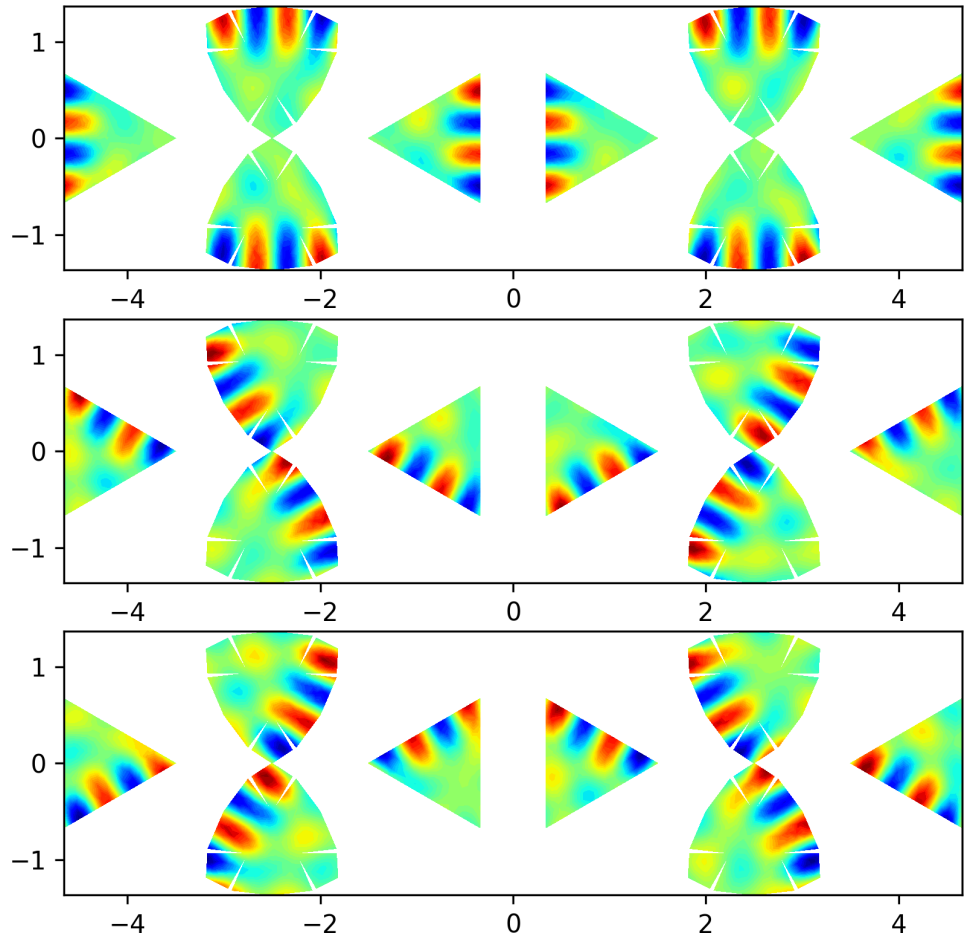

\minipage{\textwidth}
\centering
  \includegraphics[width=\linewidth]{fig/Pics_2_50/EF_2_67.png}
\endminipage
\vspace{0.1cm}
\minipage{\textwidth}
\centering
  \includegraphics[width=\linewidth]{fig/Pics_2_50/EF_2_68.png}
\endminipage
\vspace{0.1cm}
\minipage{\textwidth}
\centering
  \includegraphics[width=\linewidth]{fig/Pics_2_50/EF_2_69.png}
\endminipage\hfill
\caption{Eigenfunctions 67, 68, 69 of level $m=2$}
\label{fig:ex6}
\end{figure}

\clearpage


\bibliographystyle{amsplain}

\end{document}